    \newtheorem{thm}{Theorem}[section]
    \newtheorem{prop}[thm]{Proposition}
    \newtheorem{lem}[thm]{Lemma}
    \newtheorem{cor}[thm]{Corollary}
    \theoremstyle{definition}
    \newtheorem{defn}[thm]{Definition}
    \theoremstyle{remark}
    \newtheorem{rem}[thm]{Remark}
\newcommand{\coloneq}{\mathrel{\mathop:}\mkern-1.2mu=}
\newcommand{\executeiffilenewer}[3]{%
\ifnum\pdfstrcmp{\pdffilemoddate{#1}}%
{\pdffilemoddate{#2}}>0%
{\immediate\write18{#3}}\fi%
}
\newcommand{%
\executeiffilenewer{.svg}{.pdf}%
{inkscape -z -D --file=.svg %
--export-pdf=.pdf --export-latex}%
\input{.pdf_tex}%
}[1]{%
\executeiffilenewer{#1.svg}{#1.pdf}%
{inkscape -z -D --file=#1.svg %
--export-pdf=#1.pdf --export-latex}%
\input{#1.pdf_tex}%
}
\newcounter{richardcomments}
\newcommand{\fakeenv}{}
\newenvironment{restate}[2]                                    
{ 
 \renewcommand{\fakeenv}{#2}                              
 \theoremstyle{plain} 
 \newtheorem*{\fakeenv}{#1~\ref{#2}}                
 \begin{\fakeenv}
}
{
 \end{\fakeenv}
}
\title{Contractible, hyperbolic but non-CAT(0) complexes}
\author{Richard~C.~H.~Webb}
 \email{richard.webb@manchester.ac.uk}
\begin{document}

\begin{abstract}

We prove that almost all arc complexes do not admit a CAT(0) metric with finitely many shapes, in particular any finite-index subgroup of the mapping class group does not preserve such a metric on the arc complex. We also show the analogous statement for all but finitely many disc complexes of handlebodies and free splitting complexes of free groups. The obstruction is combinatorial. These complexes are all hyperbolic and contractible but despite this we show that they satisfy no combinatorial isoperimetric inequality: for any $n$ there is a loop of length $4$ that only bounds discs consisting of at least $n$ triangles.

On the other hand we show that the curve complexes satisfy a linear combinatorial isoperimetric inequality, which answers a question of Andrew~Putman.

\end{abstract}

\maketitle

\section{Introduction}

In general the mapping class group $\mathrm{Mod}(S)$ cannot act properly by semisimple isometries on a complete CAT(0) space   \cite{KapovichLeeb, BridsonHaefliger, Bridson}, in particular, it is not a CAT(0) group. However, the Teichm\"uller space with the Weil--Petersson metric is CAT(0) \cite{Tromba,WolpertNegative,WolpertGeodesic}, furthermore so is its completion \cite[Corollary~II.3.11]{BridsonHaefliger}, and the mapping class group acts on this by semisimple isometries \cite{DaskalopoulosWentworth}. When $g\geq 3$, Bridson \cite{Bridson} used this action to show that any non-trivial homomorphism \[\mathrm{Mod}(S_{g,p})\to\mathrm{Mod}(\Sigma),\]must send Dehn twists to roots of multitwists. This result was then used by Aramayona--Souto \cite{AramayonaSouto} to classify---under topological assumptions on $S_{g,p}$ and $\Sigma$---all non-trivial homomorphisms from $\mathrm{Mod}(S)$ to $\mathrm{Mod}(\Sigma)$. Despite the fact that the mapping class group is not CAT(0) in general, the study of its algebra has been enhanced by its action on CAT(0) spaces.

Perhaps the most striking application of actions on CAT(0) spaces has been provided by CAT(0) cube complexes and their role in  the proof of the virtual Haken conjecture, see \cite{Agol}. Not only did this uncover a difficult topological consequence but there were many exciting \textit{algebraic} consequences for the fundamental groups of closed, hyperbolic $3$-manifolds such as largeness, LERF, linearity over $\mathbb{Z}$, bi-orderability, conjugacy separability, see for example \cite{AFW} and the references therein.

One theme of this paper concerns the problem of finding CAT(0) metrics on a given space. The spaces of interest in this paper are locally infinite complexes, such as the arc complex of a surface and the free splitting complex of a free group. We say that a complex $K$ (equipped with a metric) has \textit{finitely many shapes} if there are only finitely many isometry classes of simplex in $K$. We show the following

\begin{thm}\label{thm:fin}

Whenever $K$ is a (not necessarily locally compact) flag simplicial complex equipped with a CAT(0) metric with finitely many shapes then $K$ satisfies a quadratic combinatorial isoperimetric inequality.

\end{thm}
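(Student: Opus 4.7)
The plan is to construct an explicit combinatorial filling of any combinatorial loop $\gamma$ of length $n$ by building a ``geodesic cone'' using CAT(0) geodesics, and then discretizing via the simplicial structure of $K$.

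\textbf{Setup and cone filling.} Let $R$ denote the maximum diameter of a simplex in $K$, which is finite by the finite-shapes hypothesis; in particular every edge has length at most $R$. Denote the vertices of $\gamma$ cyclically by $v_0,\ldots,v_{n-1}$, and for each $i$ let $\alpha_i\colon[0,1]\to K$ be the CAT(0) geodesic from $v_0$ to $v_i$, parametrized proportionally to arclength. The triangle inequality along $\gamma$ gives $\mathrm{length}(\alpha_i)\leq iR$, and by convexity of the metric in a CAT(0) space,
\[
d(\alpha_i(s),\alpha_{i+1}(s)) \;\leq\; s\cdot d(v_i,v_{i+1}) \;\leq\; sR
\]
for all $s\in[0,1]$, so consecutive geodesics fellow-travel within $R$ throughout. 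Together the $\alpha_i$ form a singular disc filling $\gamma$ from $v_0$, decomposed into $n$ ``thin'' geodesic triangles $T_i$ with apex $v_0$, one short side $v_iv_{i+1}$, and two CAT(0) geodesic sides $\alpha_i,\alpha_{i+1}$.

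\textbf{Discretization and ladder filling.} Each $\alpha_i$ is compact, hence meets only finitely many simplices of $K$ (since $K$ has finite shapes, each simplex contains a metric ball of uniform radius about its barycentre, so the number of simplices met is bounded by a linear function of the length). From this I extract a combinatorial path $p_i$ from $v_0$ to $v_i$ of length $O(n)$ closely tracking $\alpha_i$. Each thin triangle $T_i$ is likewise a compact subset meeting only finitely many simplices in a region of metric ``width'' at most $R$ and ``height'' $O(n)$; using the flag property of $K$ (so that triangles fill in automatically once the edges exist), I build a ``ladder'' filling of $T_i$ of combinatorial size $O(n)$ that interpolates between $p_i$ and $p_{i+1}$. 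Summing over the $n$ thin triangles yields a combinatorial disc filling $\gamma$ with $O(n^2)$ triangles, as required.

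\textbf{Main obstacle.} The delicate point is the ladder construction for $T_i$. Since $K$ is not assumed locally finite, metric closeness of two vertices does not by itself guarantee that they span an edge or a bounded-length combinatorial path. One has to use finiteness of shapes to convert the CAT(0) fellow-traveling of $\alpha_i$ and $\alpha_{i+1}$ into uniformly bounded ``vertical rungs'' of the ladder (that is, short combinatorial connections between corresponding vertices of $p_i$ and $p_{i+1}$), and then invoke the flag hypothesis to fill in each ladder cell by a bounded number of triangles. Once this local step is made precise, the global estimate $O(n)\cdot n=O(n^2)$ is immediate.
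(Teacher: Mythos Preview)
Your overall strategy---cone over the loop by CAT(0) geodesics from a basepoint, discretize each geodesic to a combinatorial path, and build ladders between consecutive paths---is exactly the paper's approach. You have also correctly located the crux: turning metric closeness into combinatorial adjacency in a non-locally-finite complex.

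However, what you call the ``main obstacle'' is not a detail to be made precise later; it is the entire content of the proof, and as stated your outline does not yet overcome it. Two specific points:

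\emph{(i) The fellow-travelling constant is wrong.} You bound $d(\alpha_i(s),\alpha_{i+1}(s))\le R$, the maximal simplex diameter. But being within $R$ of a vertex tells you nothing combinatorially: in the suspension-of-$\mathbb{R}$ example, any two vertices are within $2R$ yet can be arbitrarily far in the $1$-skeleton. The paper first subdivides the loop into $O(n)$ arcs of length at most $\epsilon/2$, where $\epsilon$ is a \emph{thickness} constant extracted from the finite-shapes hypothesis (disjoint faces of a simplex are uniformly $\epsilon$-separated). Only then do consecutive geodesics $c_i,c_{i+1}$ stay within $\epsilon/2$, which is what the adjacency argument needs.

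\emph{(ii) The rung mechanism is missing.} The paper's discretization assigns to each geodesic point $c_i(t)$ a vertex $v$ with $c_i(t)\in N(v)$ (the closed barycentric star of $v$), and thickness says: if $N_\epsilon(N(v))$ meets a simplex $\Delta$, then $v\in\Delta$. Combining this with the $\epsilon/2$-fellow-travelling, one shows (the paper's Lemmas~\ref{claim1} and~\ref{claim2}) that the vertices chosen along $c_i$ and $c_{i+1}$ at corresponding parameters lie in a \emph{common simplex} of $K$, hence are genuinely adjacent. This is what produces rungs of length~$1$, and the flag hypothesis then fills each ladder cell with a single triangle. Your proposal gestures at ``uniformly bounded vertical rungs'' but gives no mechanism for producing them; the thickness constant $\epsilon$ and the barycentric-star trick are precisely that mechanism.

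Once you insert these two ingredients, your sketch becomes the paper's proof.
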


Any CAT(0) space satisfies a quadratic \textit{coarse} isoperimetric inequality. We remark that this observation does not suffice to prove Theorem~\ref{thm:fin}. We point the reader to Figure~\ref{complex} for an elementary example.

As far as the author is aware, Theorem~\ref{thm:fin} is not in the literature and is new. Perhaps this is because the focus of the isoperimetric inequality has mainly been on finitely presented groups, where the optimal combinatorial isoperimetric inequality and the optimal coarse isoperimetric inequality (the \textit{Dehn function}) are equivalent for Cayley complexes. Cayley complexes are always locally compact with a cocompact group action---both of these properties are necessary for the equivalence between the coarse and the combinatorial.

In Section~\ref{nocomb} we use Theorem~\ref{thm:fin} to show that the majority of arc complexes do not admit a CAT(0) metric with finitely many shapes. The same holds for all but finitely many disc complexes of handlebodies and all but finitely many free splitting complexes.

\begin{thm}\label{thm:bigbig}

Let $K$ be one of the following complexes.
\begin{itemize}
\item The arc complex $\mathcal{A}(S_{g,p})$ where $g\geq 2$ and $p\geq 2$ (or $p\geq 6-2g$ when $g=0$ or $1$).
\item The disc complex $\mathcal{D}_n$ of a handlebody of genus $n\geq 5$.
\item The free splitting complex $\mathcal{FS}_n$ of a free group of rank $n\geq 5$.
\end{itemize}
Then there is a family of loops $c_N$ of combinatorial length $4$ in $K^{(1)}$ such that the following holds. Whenever $P$ is a triangulation of a surface with one boundary component and $f\colon P \to K^{(2)}$ is a simplicial map where $f|_{\partial P}$ maps bijectively onto $c_N$ then $P$ must have at least $N$ triangles.
In particular $K$ does not admit a CAT(0) metric with finitely many shapes.
\end{thm}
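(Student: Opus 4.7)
The final clause of Theorem~\ref{thm:bigbig} follows immediately from Theorem~\ref{thm:fin}: a CAT(0) metric with finitely many shapes would force every combinatorial loop of length $4$ to bound a triangulated disc with a number of triangles bounded purely in terms of $4$, contradicting the family $\{c_N\}$ for $N$ large. So the real task is to construct the loops $c_N$ and to prove the triangle-count lower bound, and the plan is to treat the arc complex case in detail and then port the argument to $\mathcal{D}_n$ and $\mathcal{FS}_n$.

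For $\mathcal{A}(S_{g,p})$ the plan is to fix two disjoint arcs $\gamma,\delta$ whose complement contains an essential subsurface $Y$ of positive complexity supporting a pseudo-Anosov $\phi$. Pick any arc $\alpha\subset Y$ disjoint from both $\gamma$ and $\delta$, set $\beta_N \coloneq \phi^N(\alpha)$, and let $c_N$ be the $4$-cycle $\alpha,\gamma,\beta_N,\delta$; by construction this is a genuine cycle in $\mathcal{A}(S)^{(1)}$. The hypothesis on $g,p$ is precisely what is needed to find such a $Y$. For $\mathcal{D}_n$ I would replace $Y$ with a sub-handlebody of genus $\geq 2$ cut off by disjoint compressing discs $\gamma,\delta$, and use a high power of a handlebody-preserving pseudo-Anosov on $\partial Y$; for $\mathcal{FS}_n$ I would pick two disjoint free splittings whose common refinement carries a free factor of rank $\geq 2$ and use a power of a fully irreducible automorphism supported on that factor. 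The stated lower bounds on $n$ are exactly what allow the auxiliary factor/subsurface to have enough room.

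To lower-bound the triangle count, fix any simplicial filling $f\colon P \to K^{(2)}$ with $f|_{\partial P}$ bijecting onto $c_N$. Each interior edge of $P$ maps to a disjoint pair of vertices of $K$, and each triangle to a pairwise-disjoint triple. The plan is to use the subsurface projection $\pi_Y$ (respectively the Masur--Schleimer disc projection, or a Handel--Mosher / Bestvina--Feighn free-factor projection) as an invariant on those vertices of $P$ whose image intersects $Y$ essentially. Behrstock's inequality and its analogues in the other two settings imply that $d_Y(\pi_Y f(v),\pi_Y f(v'))$ is bounded by a universal constant whenever $v,v'$ span an edge of $P$ and both projections are defined. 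Tracing a chain of edges in $P^{(1)}$ from a vertex on the $\alpha$-side of $\partial P$ to a vertex on the $\beta_N$-side then forces $\Omega(N)$ interior edges, and hence $\Omega(N)$ triangles; after absorbing constants into the choice of the power of $\phi$ this yields the desired lower bound of at least $N$ triangles.

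The main obstacle is handling vertices of $P$ whose image is disjoint from $Y$, where $\pi_Y$ is undefined. These can appear throughout the interior of $P$ even though the four boundary vertices all project essentially. I expect to deal with them either by a neighbour-replacement argument that pushes such vertices into the defined locus, or by assigning a relative projection value from an adjacent defined vertex and absorbing the error into the Behrstock constant. Verifying these Behrstock-type inequalities uniformly across the three complexes, and reconciling them with the undefined-projection case, is the technical heart of the argument; the final CAT(0) conclusion is then routine via Theorem~\ref{thm:fin}.
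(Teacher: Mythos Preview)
Your outline has a genuine gap at precisely the point you flag as ``the main obstacle,'' and the fix you sketch does not work. In your construction the arcs $\gamma$ and $\delta$ are disjoint from $Y$ by design, so $\pi_Y(\gamma)$ and $\pi_Y(\delta)$ are \emph{undefined}; your parenthetical ``even though the four boundary vertices all project essentially'' is simply false. More importantly, the interior of $P$ can contain an entire connected region of vertices whose images miss $Y$. Such a region can form a wall in $P^{(1)}$ joining the boundary edge at $\tilde\gamma$ to the boundary edge at $\tilde\delta$, separating $\tilde\alpha$ from $\tilde\beta_N$. In that situation every edge-path from $\tilde\alpha$ to $\tilde\beta_N$ passes through vertices with undefined $\pi_Y$, all of whose neighbours may also have undefined $\pi_Y$; neither ``neighbour replacement'' nor ``borrow the value from an adjacent defined vertex'' can be made to work, and the Behrstock bound gives you nothing across the wall. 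Since your $\gamma$ and $\delta$ are close in $\mathcal{A}(S)$ (distance~$2$ via $\alpha$), there is nothing forcing such a wall to be long.

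The paper repairs exactly this by using \emph{two} complementary subsurfaces rather than one. One takes a separating curve cutting $S$ into pieces $Y$ and $Z$, and chooses the four boundary arcs so that two of them are far apart in $\mathcal{A}(Y)$ and the other two are far apart in $\mathcal{A}(Z)$. Every arc of $S$ either cuts $Y$ (colour it red) or misses $Y$ and hence cuts $Z$ (colour it blue). A Hex-theorem argument on the triangulated disc then forces either a red edge-path between the two $Y$-arcs or a blue edge-path between the two $Z$-arcs; the relevant projection ($\kappa_Y$ or $\kappa_Z$) is defined along the entire monochromatic path and is $1$-Lipschitz, so the path has length at least $3N$, giving at least $N$ triangles. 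No Behrstock inequality is needed. For $\mathcal{D}_n$ and $\mathcal{FS}_n$ the paper does not redo the argument with disc/splitting projections; instead it uses an explicit $1$-Lipschitz map back to an arc complex (via $D\mapsto \kappa_X\partial D$ for handlebodies, and the Hamenst\"adt--Hensel $1$-Lipschitz left inverse for free splittings), reducing both cases to the arc-complex statement already proved.
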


In other words, each of the above complexes does not satisfy any combinatorial isoperimetric inequality at all. Note that all of the above complexes are contractible \cite{Harer85,Hatcher91,McCullough,Hatcher95}, hyperbolic \cite{MasurSchleimer,HandelMosher}, and such that any finite group action must fix some point \cite{Kerckhoff,Harer,HOP}, and so the usual and more well-known obstructions to being CAT(0) do not apply. Our obstruction to being CAT(0) with finitely many shapes is new.

On the other hand the above complexes are in stark contrast to the curve complexes and the arc-and-curve complexes, as we show

\begin{restate}{Theorem}{linearcombi}

The curve complex $\mathcal{C}(S)$ and arc-and-curve complex $\mathcal{AC}(S)$ satisfy a linear combinatorial isoperimetric inequality.
\end{restate}

In \cite[p.~104]{MasurMinskyI} Masur~and~Minsky state that it is an interesting question whether the arc-and-curve complexes $\mathcal{AC}(S)$ admit a CAT($\kappa$) metric for some $\kappa\leq 0$. Note that any example is immediately also CAT(0). A bold guess based on Theorem~\ref{thm:bigbig} seems to be no. The proof of Theorem~\ref{thm:fin} also applies to higher-dimensional combinatorial isoperimetric inequalities, see Theorem~\ref{higherthm}, and at present it is unclear whether $\mathcal{AC}(S)$ should satisfy any at all. This avenue might lead to a negative answer to Masur~and~Minsky's question. We note that this question is important: if the answer is yes with an isometric action of a finite-index subgroup of $\mathrm{Mod}(S)$, then it might prove to be a useful tool for understanding $\mathrm{Mod}(S)$ and its quotients in a similar vein to \cite{DahmaniHS}.

We sketch the idea of the proof of Theorem~\ref{linearcombi} at the start of Section~\ref{linearsec}. The most important feature is the use of Masur~and~Minsky's tightening procedure i.e. the idea of the proof of the existence of tight geodesics \cite[Lemma~4.5]{MasurMinskyII}.

Presently the tightening procedure (and tight geodesics) is one of the missing pieces in the analogous Masur--Minsky theory for $Out(F_n)$. Such a theory could prove useful for example in showing that the action on the free factor complex is acylindrical, which seems to be open, but known for the curve complex \cite{BowditchTight}. However much intense progress has been made in discovering/creating analogues of hyperbolicity \cite{BestvinaFeighnHyp, HandelMosher,HilionHorbez} and subsurface projection \cite{BestvinaFeighnSubfactor,Taylorproj}. Theorem~\ref{linearcombi} is a new phenomenon that is not yet observed in the $Out(F_n)$ theory. Because the tightening procedure leads to a proof of Theorem~\ref{linearcombi}, there is an implied relationship between the two. The following question is therefore interesting: is there a cocompact complex for $Out(F_n)$, analogous to $\mathcal{C}(S)$, that satisfies a linear combinatorial isoperimetric inequality?

We remark that it is currently not known whether the pants complex satisfies a quadratic combinatorial isoperimetric inequality. In light of Theorem~\ref{thm:fin} this is related to a question of Brock who asked whether there is a CAT(0) complex analogous to the pants complex (it is already a theorem of Brock that the pants complex is quasi-isometric to the Weil--Petersson metric \cite{Brock}). Recently Islambouli~and~Klug showed that any loop in the $1$-skeleton of the pants complex of a closed surface determines a smooth $4$-manifold \cite{IslambouliKlug}, and that any smooth $4$-manifold arises by some such loop. They also define a signature that provides a lower bound on the number of triangles for any disc bounding that loop. It would seem that the structure of smooth $4$-manifolds could give new insight on the pants complex, or possibly the other way round, or both.

\subsection*{Acknowledgements}

The author is indebted to Andrew~Putman, who asked the question whether the curve complex satisfies a linear combinatorial isoperimetric inequality, and who pointed out that this does not immediately follow from hyperbolicity. The other projects pursued in this paper naturally stemmed from this question.

The author wishes to thank Mladen~Bestvina, Martin~Bridson, Daniel~Groves, Radhika~Gupta, Sebastian~Hensel, Piotr~Przytycki, Andrew~Putman, and Henry~Wilton for useful comments and conversations. We thank Mark~Bell who pointed out that the proof of Theorem~\ref{bigthm} could be promoted from discs to all surfaces with one boundary component.

This work was supported earlier by the Stokes Research Fellowship of Pembroke College, University of Cambridge, and currently the EPSRC Fellowship EP/N019644/2.

\section{Combinatorial isoperimetric inequalities} \label{sec:quad}

In this section we show the following, which is a generalisation of Theorem~\ref{thm:fin}. The notions of \textit{bounded shapes} and \textit{thick shapes} are defined below.

\begin{thm} \label{thmcat}
Whenever $K$ is a (not necessarily locally compact) flag simplicial complex equipped with a CAT(0) metric with bounded, thick shapes then $K$ satisfies a quadratic combinatorial isoperimetric inequality.
\end{thm}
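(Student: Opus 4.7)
Given a combinatorial loop $c = (v_0, v_1, \ldots, v_{n-1}, v_0)$ in $K^{(1)}$ of length $n$, the aim is to produce a simplicial filling by a triangulated disc $P$ with $O(n^2)$ triangles. Write $D$ for the diameter upper bound on simplices and $\epsilon>0$ for the thickness lower bound (so, say, each simplex contains a ball of radius $\epsilon$). The strategy is cone-and-ladder: construct a quadratic-area geometric filling from the CAT(0) metric, replace its geodesic rays by combinatorial edge-paths using thick shapes, and close the resulting strips into simplicial ladders using the flag condition.

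\textit{Cone and combinatorial approximation of geodesics.} Fix $x = v_0$ and let $\gamma_i$ denote the unique CAT(0) geodesic from $x$ to $v_i$. Together with $c$, these bound $n$ CAT(0) geodesic triangles $T_i = \triangle(x, v_i, v_{i+1})$. Reshetnyak's comparison bounds $\mathrm{area}(T_i)$ by that of the Euclidean triangle with the same side lengths, which is at most $\tfrac{1}{2}D\cdot\max(|\gamma_i|,|\gamma_{i+1}|)$; since each $|\gamma_i|\leq Dn$, this is $O(n)$. Summing over $i$ gives a cone on $c$ of total area $O(n^2)$. To convert each $\gamma_i$ into an edge-path $\tilde\gamma_i$ in $K^{(1)}$, I would list the open simplices $\sigma_0, \sigma_1, \ldots$ traversed by $\gamma_i$ in order and, at each transition, choose a vertex of the shared face $\sigma_j\cap\sigma_{j+1}$. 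Flagness guarantees that consecutive chosen vertices span an edge. Thick shapes bound below by a uniform constant the arclength of $\gamma_i$ inside each visited simplex, so the combinatorial length of $\tilde\gamma_i$ is $O(|\gamma_i|) = O(n)$.

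\textit{Ladder assembly, count, and main obstacle.} For each $i$ I would fill the region bounded by $\tilde\gamma_i$, the edge $v_iv_{i+1}$, and $\tilde\gamma_{i+1}$ with a simplicial ladder of $O(n)$ triangles. CAT(0) convexity of the distance function gives $d(\gamma_i(t), \gamma_{i+1}(t)) \leq D$ at all times, so corresponding vertices on $\tilde\gamma_i$ and $\tilde\gamma_{i+1}$ sit in a common bounded cluster of simplices, and flagness then supplies the rung edges. Summing over $i$ gives $O(n^2)$ triangles in total, which is the desired quadratic bound. The main obstacle is this ladder assembly: one must verify that the CAT(0) fellow-traveler property is preserved under combinatorial approximation tightly enough to produce actual rung edges in $K^{(1)}$, and that the pinch points near $x$ and near the adjacent boundary vertices $v_i, v_{i+1}$ (where the two rails collapse) can be triangulated cleanly using the flag condition. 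This careful case analysis, driven by CAT(0) comparison together with flagness and bounded/thick shapes, is where the real technical work lies.
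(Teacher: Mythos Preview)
Your cone-and-ladder strategy is exactly the paper's approach, but there is a genuine gap in the ladder assembly, and it is precisely the point where non-local-compactness bites.

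You take geodesics $\gamma_i$ only to the original vertices $v_i$, so CAT(0) convexity gives $d(\gamma_i(t),\gamma_{i+1}(t))\leq D$, where $D$ is the edge-diameter bound. You then assert that ``corresponding vertices on $\tilde\gamma_i$ and $\tilde\gamma_{i+1}$ sit in a common bounded cluster of simplices, and flagness then supplies the rung edges.'' This inference is false in general: in a non-locally-compact complex, metric distance $\leq D$ gives no bound whatsoever on combinatorial distance. The suspension of $\mathbb{R}$ (the paper's motivating counterexample) already illustrates this: two points at bounded metric distance can be separated by arbitrarily many edges. Your parenthetical gloss on thick shapes (``each simplex contains a ball of radius $\epsilon$'') is also not the right condition; the paper's thick-shapes hypothesis says that $N_\epsilon(N(v))$ meets a simplex $\Delta$ only if $v\in\Delta^{(0)}$, which is what one actually needs.

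The paper repairs exactly this step by subdividing the loop first: rather than coning only to the $v_i$, one subdivides $c$ into points $c(t_i)$ at spacing $0.5\epsilon$ (not $D$), producing $O(\epsilon^{-1}Dn)$ geodesic rays from the basepoint. Now adjacent rays satisfy $d(c_i(s),c_{i+1}(R_is))\leq 0.5\epsilon$ by CAT(0) comparison, and this is small enough that $\epsilon$-thickness forces the approximating vertices on the two rails to lie in a \emph{common simplex} of $K$, hence to be adjacent. That is the content of the paper's Lemmas~\ref{claim1} and~\ref{claim2}, and it is where the real work happens. Once you make this refinement, your argument becomes the paper's; without it, the rung edges need not exist.
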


It is well known that CAT(0) spaces satisfy a quadratic \textit{coarse} isoperimetric inequality (see \cite[Chapter~III.H.2.4]{BridsonHaefliger}) but for complexes that are not locally compact this does not suffice to prove a quadratic \textit{combinatorial} isoperimetric inequality, see Figure~\ref{complex} for an example. In fact any two of the three hypotheses (CAT(0), bounded shapes, or thick shapes) of Theorem~\ref{thmcat} are satisfied by some metric on the complex given in Figure~\ref{complex} and so all three hypotheses are necessary.

The main theorems of this paper concern simplicial complexes and therefore for brevity we focus on this case. We expect that Theorem~\ref{thmcat} is true for more general complexes $K$ by using the methods given here but we have not checked the details.

\begin{figure}
\begin{center}
\executeiffilenewer{complex.svg}{complex.pdf}%
{inkscape -z -D --file=complex.svg %
--export-pdf=complex.pdf --export-latex}%
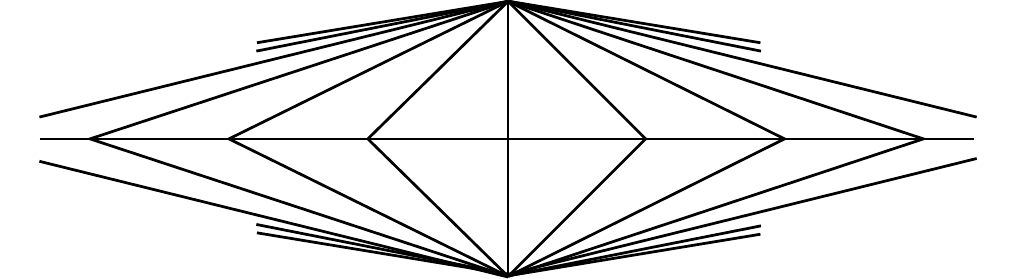%

\caption{A natural triangulation of the suspension of $\mathbb{R}$ produces a non-locally compact, 2-dimensional simplicial complex that satisfies no combinatorial isoperimetric inequality: there are loops of combinatorial length four that require arbitrarily many triangles to deform them to a point. From an appropriate embedding in the euclidean plane it can be endowed with a complete CAT(0) metric.}
\label{complex}
\end{center}
\end{figure}

\subsection{Some background}

Now we explain the definitions in Theorem~\ref{thmcat}.

Let $P$ and $K$ be (simplicial) complexes. A \textit{simplicial map} is a map $c \colon P \to K$ such that whenever $\Delta_P\subset P$ is a simplex then the image of the vertex set of $\Delta_P$ under $c$ is a set of vertices of $K$ that span a simplex $\Delta_K\subset K$. The map $c$ may also be thought of as a continuous map between the spaces $P$ and $K$: we may define $c$ on any simplex $\Delta_P \subset P$ by extending linearly the images of the vertices of $\Delta_P$ that span $\Delta_K\subset K$.

The \textit{$i$-skeleton} $K^{(i)}$ of $K$ is the unique subcomplex of $K$ consisting of all simplices of dimension at most $i$ in $K$.

A \textit{combinatorial loop} $c$ in $K$ is a sequence of vertices $(v_1,\ldots ,v_j)$ of $K^{(1)}$ where $v_j$ is adjacent (or equal) to $v_1$ and $v_i$ is adjacent (or equal) to $v_{i+1}$ whenever $1\leq i \leq j-1$. The \textit{combinatorial length} $l_C(c)$ of $c=(v_1,\ldots ,v_j)$ is equal to $j$. A combinatorial loop $c$ in $K$ may also be thought of as a simplicial map $c\colon P \to K$ where $P$ is a triangulation of $S^1$ with $j$ $1$-simplices.

We write $D^2$ for the closed unit disc with boundary $S^1$. Let $c$ be a combinatorial loop in $K$. We say that $c$ \textit{can be capped off with at most $n$ triangles} if there is a triangulation $P$ of $D^2$ into at most $n$ 2-simplices and there is a simplicial map $c'\colon P \to K^{(2)}$ such that $c'|_{S^1}=c$. In more informal words, the loop $c$ in $K$ can be deformed continuously to a point by pushing $c$ past at most $n$ triangles.

\begin{defn}\label{cii} Let $K$ be a simplicial complex. A function $f \colon \mathbb{N} \to \mathbb{N}$ is called a \textit{combinatorial isoperimetric bound} for $K$ if every combinatorial loop $c$ in $K$ can be capped off with at most $f(l_C(c))$ triangles. 

We say that $K$ satisfies a \textit{linear (or quadratic) combinatorial isoperimetric inequality} if there exists a combinatorial isoperimetric bound $f$ for $K$ such that $f(n)=O(n)$ (or $f(n)=O(n^2)$). We say that $K$ \textit{satisfies no combinatorial isoperimetric inequality} if no combinatorial isoperimetric bound of $K$ exists.
\end{defn}

\begin{rem} Another way of defining combinatorial isoperimetric inequalities is by using van Kampen diagrams, see \cite[Chapter~I.8A.4]{BridsonHaefliger}. Van Kampen diagrams are combinatorial maps from planar 2-complexes (these are not necessarily discs, they may have separating edges) to $K$. Notwithstanding it is well known that the van Kampen perspective and Definition~\ref{cii} are equivalent for complexes where the $2$-cells have boundedly many sides, in particular, in this paper the two notions agree. As we will see in Section~\ref{nocomb}, for our purposes it is more convenient to use Definition~\ref{cii}.
\end{rem}

\begin{rem}
Sometimes the existence of a combinatorial isoperimetric bound restricted to combinatorial loops of short length implies the existence of a bonafide combinatorial isoperimetric bound. For instance suppose that there exists $B$ such that whenever a combinatorial loop $c$ satisfies $l_C(c)\leq 16\delta$ then $c$ can be capped off using at most $B$ triangles. Furthermore, suppose that $K^{(1)}$ is $\delta$-hyperbolic (where each edge has length equal to $1$).  Then $K$ satisfies a linear combinatorial isoperimetric inequality. This follows from the linear coarse isoperimetric inequality (see \cite[Chapter~III.H~2.6~and~2.7]{BridsonHaefliger}). This is well known for Rips complexes of hyperbolic groups but the focus in this paper is on \textit{non-locally-compact} complexes.   Note that for the complex  in Figure~\ref{complex} there is no such integer $B$ despite $K^{(1)}$  being $2$-hyperbolic.
\end{rem}

Now we consider CAT(0) metrics on the topological spaces determined by simplicial complexes. We assume that the reader knows the definition of a CAT(0) metric space, see \cite[Chapter~II.1.1]{BridsonHaefliger}. For cosmetic reasons, in places where no confusion will arise, we abuse notation by writing $K$ for the simplicial complex, topological space, and the metric space $(K,d_K)$.

\begin{defn} \label{ssshapes} We say that $(K,d_K)$ has \textit{finitely many shapes} if there are only finitely many isometry types of metrics $d\colon \Delta \times \Delta \to \mathbb{R}$ on the simplices $\Delta$ of $K$ that are induced by restriction of $d_K$.

\end{defn}


We write $N_\epsilon(A)$ for the closed $\epsilon$-neighbourhood of a subset $A\subset K$. Likewise we write $N_\epsilon(x)=N_\epsilon(\{x\})$ for an element $x\in K$.

\begin{defn}\label{boundedthick}
We say that $K$ has \textit{bounded} shapes if for every $\delta>0$ there is a subdivision of each edge of $K$ into at most $n=n(K,\delta)$ intervals, such that each interval has diameter at most $\delta$.

We say that $K$ has \textit{thick} shapes if there exists $\epsilon>0$ such that the following holds. Write $\tilde{K}$ for the first barycentric subdivision of $K$. For each $v\in K^{(0)}$, set $N(v)$ to be the closure of the union of simplices $\tilde{\Delta}$ of $\tilde{K}$ such that $v\in \tilde{\Delta}$. Then for every simplex $\Delta$ of $K$, whenever $N_\epsilon(N(v))\cap \Delta\neq \emptyset$ then $v\in \Delta^{(0)}$ (see Figure~\ref{thick}).
\end{defn}

\begin{figure}
\begin{center}
\executeiffilenewer{thick.svg}{thick.pdf}%
{inkscape -z -D --file=thick.svg %
--export-pdf=thick.pdf --export-latex}%
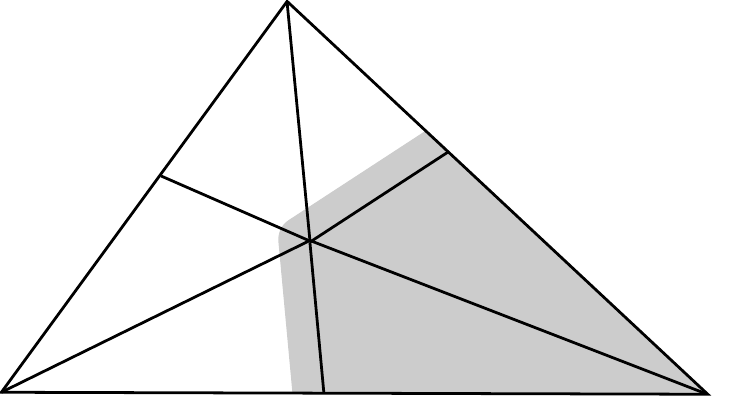%

\caption{The first barycentric subdivision of a $2$-simplex $\Delta$. A closed neighbourhood $N_\epsilon(N(v))$ of $N(v)$ is shaded and is disjoint from the opposite $1$-simplex; $\Delta$ is $\epsilon$-thick.}
\label{thick}
\end{center}
\end{figure}

In particular, having thick shapes means that disjoint simplices are uniformly far apart in $K$. Having bounded, thick shapes is a natural generalization of having finitely many shapes.

\begin{lem} \label{generalize}
If $K$ has finitely many shapes then $K$ has bounded, thick shapes.
\end{lem}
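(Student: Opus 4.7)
The plan is to verify the two conditions separately. The bounded shapes condition is immediate: since $K$ has only finitely many isometry classes of $1$-simplex, edge lengths are bounded above by some constant $L$; for any $\delta>0$, subdividing each edge into $\lceil L/\delta\rceil$ equal pieces yields subdivisions of diameter at most $\delta$, so $n(K,\delta)=\lceil L/\delta\rceil$ works.

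For the thick shapes condition, the plan is to extract a uniform local constant from the finite shape data, then propagate it through $(K,d_K)$ using the CAT(0) structure. For each isometry type $S$ of simplex in $K$, with first barycentric subdivision $\tilde{S}$, and for each vertex $w$ of $S$, write $N_S(w)$ for the closed star of $w$ in $\tilde{S}$ and $\ell_S(w)$ for the union of faces of $S$ not containing $w$. These are disjoint closed subsets of the compact metric simplex $S$, hence $d_S(N_S(w),\ell_S(w))>0$, and finiteness of shapes lets me set
\[
\epsilon \;=\; \tfrac{1}{2}\min_{S,\,w}\, d_S\bigl(N_S(w),\ell_S(w)\bigr) \;>\; 0.
\]
I claim this $\epsilon$ witnesses the thick shapes property.

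To verify, suppose for contradiction that some $v\in K^{(0)}$, some simplex $\Delta$ with $v\notin\Delta^{(0)}$, and points $x\in N(v)$, $y\in\Delta$ satisfy $d_K(x,y)<\epsilon$. Consider the unique CAT(0) geodesic $\gamma$ from $x$ to $y$. Two standard inputs are available: since $K$ is an $M_\kappa$-polyhedral complex with finitely many shapes, each closed simplex is isometrically embedded in $K$ (Bridson--Haefliger), and by uniqueness of CAT(0) geodesics the restriction of $\gamma$ to any single closed simplex is the intrinsic geodesic there. Using these I decompose $\gamma$ into finitely many straight segments living in a sequence of closed simplices. Now $x\in N(v)$ while $y$ lies in a simplex that does not contain $v$; tracking $\gamma$, I locate the first simplex $\sigma_i$ in which $\gamma$ passes from a point of $N(v)\cap\sigma_i$ to a point in $\ell_{\sigma_i}(v)$. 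By the isometric embedding of $\sigma_i$ and uniqueness of CAT(0) geodesics, the intrinsic length of that segment equals its $d_K$-length, and is at least $d_{\sigma_i}\bigl(N_{\sigma_i}(v),\ell_{\sigma_i}(v)\bigr)\geq 2\epsilon$, contradicting $d_K(x,y)<\epsilon$.

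The main technical obstacle I anticipate is making precise the claim that there is a first moment when $\gamma$ escapes $N(v)$ through some $\ell_{\sigma_i}(v)$, and that at that moment the segment of $\gamma$ inside $\sigma_i$ really begins at a point of $N(v)\cap\sigma_i$ rather than merely somewhere in $\sigma_i$; a priori, $\gamma$ could, in earlier star-simplices, cross through shared faces containing $v$ at points already outside $N(v)$, invalidating the naive $2\epsilon$ bound. Handling this cleanly requires using that $N(v)$ is a subcomplex of $\tilde{K}$ whose intersection with each star-simplex is convex with respect to the local $M_\kappa$ structure, so that the trajectory of $\gamma$ across the $\tilde{K}$-stratification can be followed and any escape from $N(v)$ occurs transversally across a codimension-one face of $N(v)\cap\sigma$ that lies in $\ell_\sigma(v)$. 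Once this bookkeeping is in place, the uniform local separation globalizes to the required $d_K(N(v),\Delta)\geq\epsilon$ and the lemma is complete.
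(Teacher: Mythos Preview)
The paper's own proof is a two-line sketch: it simply asserts that taking the barycentric subdivision isometry-class by isometry-class and invoking finiteness yields a suitable $\epsilon$, without addressing the local-to-global passage you rightly worry about. Your bounded-shapes argument and your definition of $\epsilon$ are fine, and you are already supplying more detail than the paper does. However, your resolution of the ``main technical obstacle'' contains an error. You claim that any escape of $\gamma$ from $N(v)$ crosses a codimension-one face of $N(v)\cap\sigma$ lying in $\ell_\sigma(v)$. This is false: inside a simplex $\sigma\ni v$, the set $N_\sigma(v)$ is the region where the $v$-barycentric coordinate is maximal among all coordinates, and its boundary within $\sigma$ is the locus where the $v$-coordinate equals some other coordinate---not where it vanishes. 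So $\gamma$ may exit $N(v)$ inside one simplex $\sigma$, pass through a shared face \emph{containing} $v$ into an adjacent star-simplex $\sigma'$ at a point already outside $N_{\sigma'}(v)$, and only later hit $\ell_{\sigma'}(v)$; no single simplex need witness the full $2\epsilon$ separation you set up.

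A clean fix avoids the simplex-by-simplex bookkeeping altogether. Let $\phi(p)$ denote the $v$-barycentric coordinate of $p$, extended by $0$ off the closed star of $v$; this is well-defined and continuous on $K$. Finitely many shapes bounds $\dim K$ by some $d$ and gives a uniform Lipschitz constant $L_0$ for $\phi$ on each simplex, hence on $(K,d_K)$. Since $\phi\geq 1/(d{+}1)$ on $N(v)$ while $\phi=0$ on any $\Delta$ with $v\notin\Delta^{(0)}$, every path from $N(v)$ to such a $\Delta$ has length at least $1/\bigl((d{+}1)L_0\bigr)$. This also shows that your appeal to CAT(0) (via uniqueness of geodesics) is unnecessary---the lemma does not assume it, and working with arbitrary paths suffices.
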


\begin{proof}[Sketch of proof] Clearly $K$ has bounded shapes. For thick shapes, for each simplex of $K$ we perform the first barycentric subdivision according to its isometry class, and then the finiteness provides some small enough $\epsilon>0$ to exist. \end{proof}

\begin{rem} \label{superremark}
It is an interesting question whether the complexes in Theorem~\ref{bigthm} admit a CAT(0) metric at all. If there is such a metric one can ask whether an interesting subgroup $H$ of the automorphism group still acts by isometries. By Theorem~\ref{bigthm} such a CAT(0) metric fails to have bounded, thick shapes, for example the edge lengths might be arbitrarily small (or large). In particular such a subgroup $H$ cannot have finite index.
\end{rem}

\subsection{Proof of Theorem~\ref{thmcat}}

The rest of this section is devoted to proving Theorem~\ref{thmcat}. For a point $x$ in the topological space $K$ we write $\Delta(x)$ for the minimal simplex of $K$ such that $x\in \Delta(x)$.

\begin{proof}[Proof of Theorem~\ref{thmcat}] There are different proofs but the proof given here determines an explicit way of capping off any combinatorial loop $c$ in $K$. We give a careful proof because $K$ is not necessarily locally compact.

Let $D$ and $\epsilon$ be constants such that $K$ has $D$-bounded, $\epsilon$-thick shapes.

The strategy of the proof is as follows. Given a combinatorial loop $c$ we abuse notation and write $c$ for the same topological loop in $K$, and its length in $(K,d_K)$, written $l_K(c)$, is bounded from above by $Dl_C(c)$. We parametrise $c$ by arc length in $K$ and so we write $c\colon [0,l_K(c)]\to K$. Without loss of generality $c(0)$ is a vertex of $K$. We subdivide the loop $c\colon [0,l_K(c)]\to K$ into points $c(t_i)$ at most $0.5\epsilon$ apart. For each $i$, we construct combinatorial paths $(v^i_j)_j$ connecting $v^i_0=c(0)$ to $v^i_{n(i)}$ where $c(t_i)\in N(v^i_{n(i)})$ (recall $N(v)$ from Definition~\ref{boundedthick}). The sequence $(v^i_{n(i)})_i$ defines a combinatorial loop in $K$, which is homotopic to the combinatorial loop $c$ (we discuss this at the very end of the proof). It suffices to cap off $(v^i_{n(i)})_i$ with a triangulation $P'$. We do this by constructing a triangulation $P(i)$ between $(v^i_j)_j$ and $(v^{i+1}_j)_j$ using linearly many triangles in terms of $l_C(c)$ (and there are at most linearly many $i$ in terms of $l_C(c)$). Gluing up the $P(i)$ gives a triangulation $P'$ that caps off $(v^i_{n(i)})_i$. Now we give the details.

Fix $0=t_0<t_1<\ldots <t_N=l_K(c)$ such that the closed intervals $[t_i,t_{i+1}]$ subdivide the closed interval $[0,l_K(c)]$ into pieces of length equal to $0.5\epsilon$ except possibly the last which has non-zero length at most $0.5\epsilon$. Thus $N=\lceil 2\epsilon^{-1}l_K(c) \rceil\leq 2\epsilon^{-1}Dl_C(c)+1$. We write $c_i\colon [0,d_K(c(0),c(t_i))]\to K$ for the geodesic (in the CAT(0) metric) parametrised by arc length that connects $c(0)$ and $c(t_i)$.

Now we construct explicit combinatorial paths that start at $c(0)$ by using the geodesics $c_i$. Fix $i$ and set $v^i_0=c(0)$ and $t^i_0=0$. Now for $j\geq1$ we define $v^i_j$ and $t^i_j$ inductively as follows. Note that $c_i(t^i_0)\in N(v^i_0)$ (recall $N(v)$ from Definition~\ref{boundedthick}), which is the base case.
\begin{itemize}

\item 

We set $t^i_j>t^i_{j-1}$ to be minimal such that $v^i_{j-1}\notin \Delta(c_i(t^i_j))$. Then we pick any $v^i_j\in K^{(0)}$ such that $c_i(t^i_j)\in N(v^i_j)$. By $\epsilon$-thick shapes we have $t^i_j-t^i_{j-1}\geq \epsilon$. We have $v^i_j$ adjacent to $v^i_{j-1}$.

\item 
On the other hand if no such $t^i_j$ above exists then $v^i_{j-1}$ is adjacent (or equal) to some vertex $v$ such that $c(t_i)\in N(v)$. We set $v^i_j=v$ and $t^i_j=d_K(c(0),c(t_i))$. Set $n(i)=j$. We stop.

\end{itemize}

We note that this inductive process must terminate: Whenever $0<j< n(i)$ then $t^i_j-t^i_{j-1}\geq \epsilon$ by $\epsilon$-thick shapes. Hence for $j<n(i)$ we have $t^i_j\geq j\epsilon$. Thus \[n(i)\leq \epsilon^{-1}d_K(c(0),c(t_i))+1 \leq \epsilon^{-1}Dl_C(c)+1.\]

Now for each $i$ such that $0\leq i \leq N-1$ we build triangles between the combinatorial paths $(v^i_j)_j$ and $(v^{i+1}_j)_j$. We use the comparison triangle $T(i)$ in $\mathbb{E}^2$ with sides $E_1$, $E_2$ and $E_3$ with side lengths $t^i_{n(i)}$, $d_K(c(t_i),c(t_{i+1}))$ and $t^{i+1}_{n(i+1)}$ respectively. Note that $d_K(c(t_i),c(t_{i+1}))\leq 0.5\epsilon$. We sweep out $T(i)$ using intervals of length at most $0.5\epsilon$ that are parallel to $E_2$. Write \[R_i=\frac{t^{i+1}_{n(i+1)}}{t^i_{n(i)}}.\]We may parametrise $E_1$ and $E_3$ by arc length such that $E_1(0)=E_3(0)$. Then the intervals parallel to $E_2$ are parametrised by their endpoints $E_1(s)$ and $E_3(R_is)$ for $0\leq s \leq t^i_{n(i)}$. In $K$ the intervals' endpoints correspond to $c_i(s)$ and $c_{i+1}(R_i s)$.

We endow $\partial T(i)$ with a simplicial structure induced from the points $E_1(t^i_j)$ and $E_3(t^{i+1}_j)$.

Now we define edges (written $e(m)$ below) between the  points $E_1(t^i_j)$ and the points $E_3(t^{i+1}_k)$ in order to triangulate $T(i)$. We justify doing this by finding edges (or proving equalities) between the appropriate vertices $v^i_j$ and $v^{i+1}_k$ in $K$. This is the hardest part of the proof and is the content of Lemmas~\ref{claim1}~and~\ref{claim2}. See Figure~\ref{ti} for an example triangulation.

\begin{figure}
\begin{center}
\executeiffilenewer{Ti.svg}{Ti.pdf}%
{inkscape -z -D --file=Ti.svg %
--export-pdf=Ti.pdf --export-latex}%
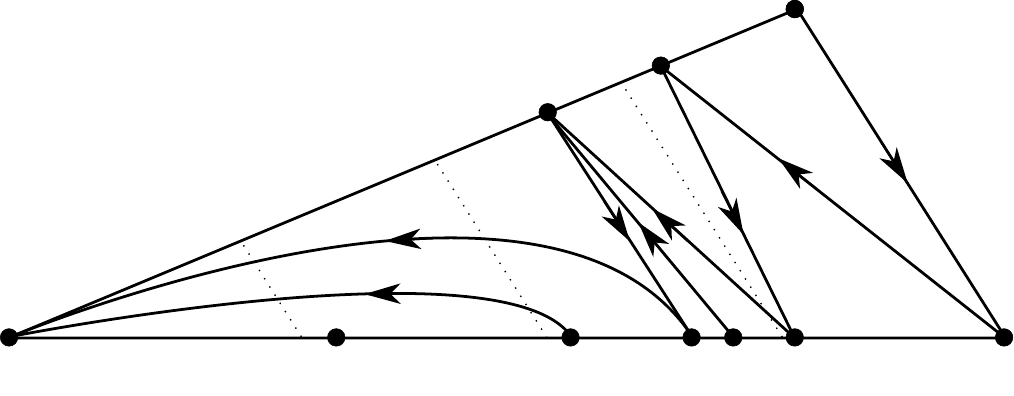%

\caption{An example (topological) triangulation $P(i)$ of $T(i)$. Here, $n(i)=6$, $n(i+1)=3$, and there are $8$ directed edges between $E_1$ and $E_3$. The edges pointing toward $E_1$ are constructed in Lemma~\ref{claim1}. The edges pointing toward $E_3$ are constructed in Lemma~\ref{claim2}. The first edge $e(1)$ always coincides with $E_2$. As we sweep out $T(i)$ starting from $E_2$, we encounter the edges $e(1),e(2),e(3)\ldots $ in that order.}
\label{ti}
\end{center}
\end{figure}

We lexicographically order the elements $(R_i t^i_j,i)$ and $(t^{i+1}_k,i+1)$ with the largest first. This ordering comes from the sweeping out of $T(i)$ by lines parallel to $E_2$: starting with $E_2$ itself and ending at the vertex opposite to $E_2$, we pass the vertices of $T(i)$ namely $E_1(t^i_j)$ and $E_3(t^{i+1}_k)$ and order them. We parametrise the above elements $(s(m),i(m))$ in order using a parameter $m$ where \[1\leq m\leq n(i)+n(i+1)+2.\] We introduce another parameter $j(m)$ so that the element $(s(m),i(m))$ will correspond to $v^{i(m)}_{j(m)}$ and $t^{i(m)}_{j(m)}$.

Now for \[1\leq m \leq n(i)+n(i+1)-1\] we find exactly one outward edge from (or another vertex equal to) $v^{i(m)}_{j(m)}$  using the following two lemmas. This enables us to define edges $e(m)$ that cut $T(i)$ into pieces that will give the required triangulation $P(i)$ of $T(i)$, see Figure~\ref{ti}.

\begin{lem}\label{claim1} If $i(m)=i+1$ then $v^i_j$ and $v^{i+1}_{j(m)}$ share an edge (or are equal) where $j$ is largest such that $t^i_j\leq R_i^{-1}t^{i+1}_{j(m)}$.
\end{lem}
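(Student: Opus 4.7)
The plan is to exploit the CAT(0) comparison between the two geodesics $c_i$ and $c_{i+1}$ from the common basepoint $c(0)$, convert the resulting metric closeness into a simplicial containment via the $\epsilon$-thick shapes property, and finally note that two vertices of a common simplex span an edge.

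First I would set $s = R_i^{-1} t^{i+1}_{j(m)}$, so that the point $E_1(s)$ on side $E_1$ and the point $E_3(R_i s)=E_3(t^{i+1}_{j(m)})$ on side $E_3$ lie on a common line of the sweep parallel to $E_2$. In the Euclidean triangle $T(i)$ their separation is, by similar triangles, exactly $(s/t^i_{n(i)}) |E_2|\leq |E_2|=d_K(c(t_i),c(t_{i+1}))\leq 0.5\epsilon$. Since $c_i$ and $c_{i+1}$ are geodesics out of $c(0)$ to $c(t_i)$ and $c(t_{i+1})$, the CAT(0) comparison inequality applied to the geodesic triangle with those three vertices gives
\[
d_K\bigl(c_i(s),c_{i+1}(t^{i+1}_{j(m)})\bigr)\leq 0.5\epsilon<\epsilon.
\]

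Next I would invoke the $\epsilon$-thick shapes hypothesis. By construction of $v^{i+1}_{j(m)}$ we have $c_{i+1}(t^{i+1}_{j(m)})\in N(v^{i+1}_{j(m)})$, so the previous inequality places $c_i(s)$ inside $N_\epsilon(N(v^{i+1}_{j(m)}))$. Since $c_i(s)\in\Delta(c_i(s))$, the intersection $N_\epsilon(N(v^{i+1}_{j(m)}))\cap \Delta(c_i(s))$ is non-empty, and $\epsilon$-thickness forces $v^{i+1}_{j(m)}\in \Delta(c_i(s))^{(0)}$.

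Now I would show that $v^i_j$ (for the prescribed $j$) is also a vertex of $\Delta(c_i(s))$. If $s<t^i_{n(i)}$, then $j$ is the unique index with $s\in[t^i_j,t^i_{j+1})$; by the minimality in the inductive definition of $t^i_{j+1}$, the vertex $v^i_j$ lies in $\Delta(c_i(t))$ for every $t\in[t^i_j,t^i_{j+1})$, so in particular $v^i_j\in\Delta(c_i(s))^{(0)}$. The edge case $s=t^i_{n(i)}$ (which forces $j=n(i)$ and $j(m)=n(i+1)$) is handled directly by $c_i(t^i_{n(i)})\in N(v^i_{n(i)})$. In either situation $v^i_j$ and $v^{i+1}_{j(m)}$ are both vertices of the simplex $\Delta(c_i(s))$, so they are equal or span a $1$-simplex, which is the conclusion.

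The main obstacle I anticipate is not CAT(0) comparison itself but the bookkeeping: one has to verify that the chosen parameter $s=R_i^{-1}t^{i+1}_{j(m)}$ genuinely lies in $[0,t^i_{n(i)}]$ (immediate from $j(m)\leq n(i+1)$ and the definition of $R_i$) and that the inductive construction really does guarantee $v^i_j\in\Delta(c_i(t))$ on the half-open interval $[t^i_j,t^i_{j+1})$, including the boundary case $j=n(i)$. Once these small verifications are in place, the CAT(0) $+$ thick-shapes dichotomy converts metric closeness into simplex membership without any further work.
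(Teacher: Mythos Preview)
Your argument is correct and is essentially the paper's own proof: set $s=R_i^{-1}t^{i+1}_{j(m)}$, use the CAT(0) comparison triangle to get $d_K(c_i(s),c_{i+1}(t^{i+1}_{j(m)}))\leq 0.5\epsilon$, apply $\epsilon$-thick shapes to force $v^{i+1}_{j(m)}\in\Delta(c_i(s))$, and use the inductive definition of the $t^i_j$ to get $v^i_j\in\Delta(c_i(s))$. Your writeup is in fact a little more careful than the paper's, since you make the half-open interval $[t^i_j,t^i_{j+1})$ and the terminal case $s=t^i_{n(i)}$ explicit.
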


\begin{proof}We argue that \[v^i_j,v^{i+1}_{j(m)}\in\Delta(c_i(R_i^{-1}t^{i+1}_{j(m)})).\]Using the comparison triangle $T(i)$ we have \[d_K(c_i(R_i^{-1}t^{i+1}_{j(m)}),c_{i+1}(t^{i+1}_{j(m)}))\leq 0.5\epsilon.\]Therefore we have  $N_\epsilon N(v^{i+1}_{j(m)})\cap \Delta(c_i(R_i^{-1}t^{i+1}_{j(m)}))\neq \emptyset$. By $\epsilon$-thick shapes we have $v^{i+1}_{j(m)}\in\Delta(c_i(R_i^{-1}t^{i+1}_{j(m)}))$.

Finally, we have $j$ largest such that $t^i_j\leq R_i^{-1}t^{i+1}_{j(m)}$. By definition of $v^i_j$ and $t^i_j$ we have $v^i_j\in\Delta(c_i(R_i^{-1}t^{i+1}_{j(m)}))$. \end{proof}

By Lemma~\ref{claim1}, as the corresponding vertices in $K$ are either equal or adjacent, we construct an edge between $E_1(t^i_j)$ and $E_3(t^{i+1}_{j(m)})$ in $T(i)$, see Figure~\ref{ti}. This defines the required $m$\textsuperscript{th} edge $e(m)$ inside $T(i)$  when $i(m)=i+1$.

However if $i(m)=i$ then we use

\begin{lem} \label{claim2}
 If $i(m)=i$ then $v^i_{j(m)}$ and $v^{i+1}_k$ share an edge (or are equal) where $k$ is largest such that $R_it^i_{j(m)}>t^{i+1}_k$.
\end{lem}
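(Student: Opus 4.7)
The plan is to run a mirror of the proof of Lemma~\ref{claim1}, interchanging the roles of $c_i$ and $c_{i+1}$: I will show that both $v^i_{j(m)}$ and $v^{i+1}_k$ are vertices of the single simplex $\Delta(c_{i+1}(R_i t^i_{j(m)}))$. Since $K$ is flag, they then either span an edge or coincide, which is exactly what the lemma asserts.

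For $v^i_{j(m)}$ I would first establish the CAT(0) comparison bound $d_K(c_i(t^i_{j(m)}), c_{i+1}(R_i t^i_{j(m)})) \leq 0.5\epsilon$. In the Euclidean comparison triangle $T(i)$ the points $E_1(t^i_{j(m)})$ and $E_3(R_i t^i_{j(m)})$ are the two endpoints of a sweep line parallel to $E_2$, whose length by similar triangles is $\tfrac{t^i_{j(m)}}{t^i_{n(i)}}|E_2| \leq |E_2| \leq 0.5\epsilon$; the CAT(0) inequality transports this bound back into $K$. Combined with $c_i(t^i_{j(m)}) \in N(v^i_{j(m)})$, this places $c_{i+1}(R_i t^i_{j(m)})$ inside $N_\epsilon(N(v^i_{j(m)}))$, so $\epsilon$-thickness forces $v^i_{j(m)} \in \Delta(c_{i+1}(R_i t^i_{j(m)}))$, exactly as in Lemma~\ref{claim1}.

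For $v^{i+1}_k$ I would appeal directly to the inductive construction along $c_{i+1}$: by $\epsilon$-thickness applied to $c_{i+1}(t^{i+1}_k) \in N(v^{i+1}_k)$ we have $v^{i+1}_k \in \Delta(c_{i+1}(t^{i+1}_k))$, and by the defining minimality of $t^{i+1}_{k+1}$ we have $v^{i+1}_k \in \Delta(c_{i+1}(t))$ for every $t$ in $[t^{i+1}_k, t^{i+1}_{k+1})$. Since $k$ is chosen maximal with $R_i t^i_{j(m)} > t^{i+1}_k$, the parameter $R_i t^i_{j(m)}$ lies in precisely this interval, which gives the second containment.

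The main subtlety, and the only place where one has to be careful, is the asymmetric use of strict inequality for $k$ here versus non-strict $\leq$ for $j$ in Lemma~\ref{claim1}: this asymmetry is exactly what is needed so that $R_i t^i_{j(m)}$ lands in the \emph{half-open} interval $[t^{i+1}_k, t^{i+1}_{k+1})$ on which $v^{i+1}_k$ still belongs to $\Delta(c_{i+1}(t))$ (equality $R_i t^i_{j(m)} = t^{i+1}_{k+1}$ is excluded by the tie-breaking convention in the lexicographic ordering of the sweep parameters). Modulo this indexing care, the argument is a formal mirror of Lemma~\ref{claim1}, and no further technical obstacle arises.
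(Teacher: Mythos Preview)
Your approach is the same as the paper's---show that both vertices lie in a single simplex along $c_{i+1}$---but you evaluate at the parameter $R_i t^i_{j(m)}$ itself, whereas the paper evaluates at $R_i t^i_{j(m)}-\delta$ for small $\delta>0$, and that difference is not cosmetic.

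The claim that ``equality $R_i t^i_{j(m)} = t^{i+1}_{k+1}$ is excluded by the tie-breaking convention'' is where the gap lies. The lexicographic tie-break only decides which of two sweep parameters is listed first in the enumeration $(s(m),i(m))$; it cannot forbid the numerical coincidence $R_i t^i_{j(m)} = t^{i+1}_{k+1}$, since those values are determined independently by where the geodesics $c_i$ and $c_{i+1}$ cross faces of $K$. If that coincidence does occur, then by the very definition of $t^{i+1}_{k+1}$ as the \emph{minimal} time with $v^{i+1}_k \notin \Delta(c_{i+1}(t^{i+1}_{k+1}))$ you get $v^{i+1}_k \notin \Delta(c_{i+1}(R_i t^i_{j(m)}))$, and your second containment fails.

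The paper's $-\delta$ perturbation is precisely the fix. Because $k$ is chosen via the \emph{strict} inequality $R_i t^i_{j(m)} > t^{i+1}_k$, one has $R_i t^i_{j(m)} - \delta \in (t^{i+1}_k, t^{i+1}_{k+1})$ for all sufficiently small $\delta>0$, which guarantees $v^{i+1}_k \in \Delta(c_{i+1}(R_i t^i_{j(m)}-\delta))$. The CAT(0) comparison for the first vertex still goes through by continuity, now with the looser bound $\leq \epsilon$ rather than $\leq 0.5\epsilon$. So the repair is a one-line change, but as written your argument has a genuine hole at this boundary case.
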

\begin{proof} We argue that for sufficiently small $\delta>0$ we have \[v^i_{j(m)},v^{i+1}_k\in\Delta(c_{i+1}(R_it^i_{j(m)}-\delta)).\]Indeed, for sufficiently small $\delta>0$ we have \[d_K(c_i(t^i_{j(m)}),c_{i+1}(R_it^i_{j(m)}-\delta))\leq \epsilon.\]Therefore we have $N_\epsilon N(v^i_{j(m)})\cap\Delta(c_{i+1}(R_it^i_{j(m)}-\delta))\neq\emptyset$ so by $\epsilon$-thick shapes we have $v^i_{j(m)}\in\Delta(c_{i+1}(R_it^i_{j(m)}-\delta))$. 

Finally for $\delta>0$ sufficiently small we have by definition \[v^{i+1}_k\in\Delta(c_{i+1}(R_it^i_{j(m)}-\delta))\] because $k$ is largest such that $R_it^i_{j(m)}>t^{i+1}_k$.\end{proof}

By Lemma~\ref{claim2}, as the corresponding vertices in $K$ are either equal or adjacent, we construct an edge between $E_1(t^i_{j(m)})$ and $E_3(t^{i+1}_k)$ in $T(i)$. This defines the required $m$\textsuperscript{th} edge $e(m)$ inside $T(i)$ for when $i(m)=i$, see Figure~\ref{ti}.

Now the edges $e(m)$ cut $T(i)$ into triangles that give a triangulation $P(i)$. This can be seen because the edges $e(m)$ and $e(m+1)$ share one vertex in common, and their endpoints correspond to values $t^i_j$ and $t^{i+1}_k$ that are monotonically decreasing in $m$. The existence of the edges in $K$ provide a natural map $P(i)\to K$ that extends the natural map $\partial T(i)\to K$. This is a simplicial map because $K$ is flag. We used exactly $n(i)+n(i+1)-1$ triangles, which is at most $2\epsilon^{-1}Dl_C(c)+1$.

Now we can cap off the combinatorial loop $(v^i_{n(i)})^{N-1}_{i=1}$ (it starts and ends at $c(0)$) with a triangulation $P'$ that is constructed by gluing (for each $i$) $P(i)$ to $P(i+1)$ along the combinatorial path $(v^{i+1}_j)^{n(i+1)}_{j=0}$. The number of triangles of $P'$ is at most $2\epsilon^{-1}Dl_C(c)+1$ multiplied by $N$. This is at most some quadratic function in $l_C(c)$.

\begin{figure}
\begin{center}\def\svgwidth{250pt}
\executeiffilenewer{lastbit.svg}{lastbit.pdf}%
{inkscape -z -D --file=lastbit.svg %
--export-pdf=lastbit.pdf --export-latex}%
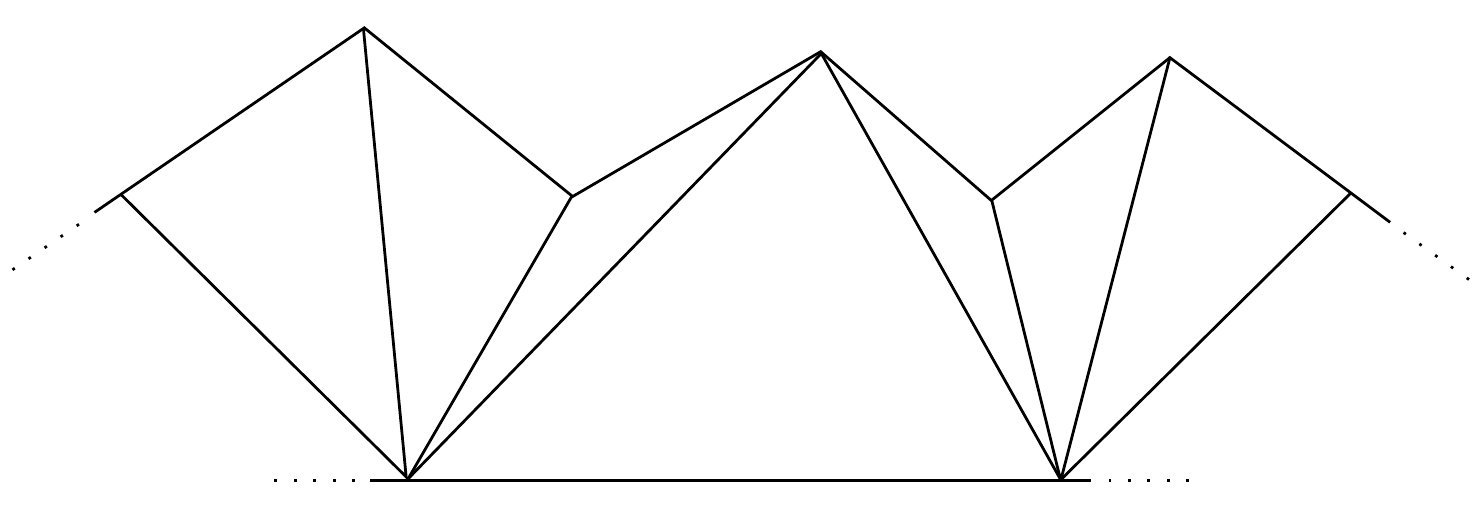%

\caption{The last step. Adding triangles to $P'$ to construct $P$ whose boundary maps onto the original combinatorial loop $c$.}
\label{last}
\end{center}
\end{figure}

Finally, we need to glue triangles along the boundary of $P'$ to construct a triangulation  $P$ that caps off the original combinatorial loop $c$. Write $c=(v_1,\ldots ,v_l)$ where $l=l_C(c)$. Each vertex $v_i$ is equal to several consecutive vertices \[v^j_{n(j)},v^{j+1}_{n(j+1)},\ldots ,v^k_{n(k)}.\] So we attach triangles between these for each $i$. Lastly there is a triangle spanned by $v_i$, $v_{i+1}$ and $v^k_{n(k)}$, see Figure~\ref{last}. This adds only at most $l_C(c)+N$ triangles.\end{proof}

\subsection{Higher-dimensional combinatorial isoperimetric inequalities}

In this section we prove that a CAT(0) flag simplicial complex with finitely many shapes must satisfy quadratic higher-dimensional combinatorial isoperimetric inequalities. The proof is versatile enough to state it in greater generality so we state Theorem~\ref{higherthm} below. The following definition is meant to be a $d$-dimensional analogue of the notion of bounded shapes, which was a constraint on the $1$-simplices of $K$.

\begin{defn}
Let $d\in\mathbb{N}$. We say that $(K,d_K)$ has $d$-\textit{bounded} shapes if given any $\delta>0$ there is a subdivision of the $d$-skeleton $K^{(d)}$ into a collection of smaller simplices $\Delta$ such that:-
\begin{itemize}
\item each $\Delta$ has diameter at most $\delta$, and,
\item every simplex of $K^{(d)}$ is subdivided into a uniformly bounded number of simplices depending only on $K$ and $\delta$.
\end{itemize}
\end{defn}

\begin{rem}If $K$ has finitely many shapes then $K$ has $d$-bounded, thick shapes for any $d\geq 1$.\end{rem}

Let $X$ be a (simplicial) complex. We write $CX=(X\times[0,1])/(X\times \{0\})$ for the \textit{cone} of $X$. We will abuse notation by writing $X=(X\times \{1\})$ for the corresponding subset of $CX$. When $X$ is $d$-dimensional we write $|X|$ for the number of $d$-simplices in $X$.

\begin{thm} \label{higherthm}
Fix $d>0$ and $K$ a (not necessarily locally compact) flag simplicial complex equipped with a CAT(0) metric with $d$-bounded, thick shapes. Then there exists a quadratic function $f\colon \mathbb{N} \to \mathbb{N}$ such that whenever $X$ is a connected $d$-dimensional complex and $g \colon  X \to K$ is a simplicial map then there is a triangulation $P$ of the cone $CX$ of $X$ and a simplicial map $g' \colon P \to K$ such that $g'|_{X}=g$, with $|P|\leq f(|X|^2)$.
\end{thm}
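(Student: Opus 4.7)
The plan is to generalise the scheme of the proof of Theorem~\ref{thmcat} to higher dimensions. Let $\epsilon>0$ witness $\epsilon$-thick shapes, put $\delta=0.5\epsilon$, and let $D=D(K,\delta)$ be the constant from $d$-bounded shapes. Pick a basepoint $x_0\in X^{(0)}$ and set $v_0=g(x_0)$; the cone apex $\star$ is sent to $v_0$ by $g'$. The role that the combinatorial loop $c$ played in Theorem~\ref{thmcat} is now played by the $d$-complex $X$: after a subdivision one attaches to each vertex a combinatorial path in $K^{(1)}$ emanating from $v_0$, and for each top-simplex one fills in the corresponding cone block in $K$ by means of a Euclidean comparison $(d+1)$-simplex and a sweep argument.

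First I would apply $d$-bounded shapes to each $d$-simplex of $K$ in the image of $g$ and use this to refine $X$ to a simplicial complex $X'$ with $|X'|=O(|X|)$ such that $g(\sigma')$ has diameter at most $\delta$ in $K$ for every top-simplex $\sigma'\subset X'$. For each vertex $v\in X'^{(0)}$ I would run the procedure from the proof of Theorem~\ref{thmcat} on the CAT(0) geodesic $\gamma_v$ from $v_0$ to $g(v)$ to produce a combinatorial path $p_v=(v_0=v^v_0,v^v_1,\ldots ,v^v_{n(v)})$ in $K^{(1)}$ with parameters $0=t^v_0<t^v_1<\cdots <t^v_{n(v)}=|\gamma_v|$ satisfying $\gamma_v(t^v_j)\in N(v^v_j)$ and $v^v_j\in\Delta(\gamma_v(t))^{(0)}$ for $t\in[t^v_j,t^v_{j+1})$. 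Since $X$ is connected, the graph diameter of $X^{(1)}$ is $O(|X|)$, so $d_K(v_0,g(v))=O(|X|)$ and $n(v)=O(|X|)$ for every $v\in X'^{(0)}$.

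For each top-simplex $\sigma'=[v'_0,\ldots ,v'_d]$ of $X'$ I realise the cone $[\star ,v'_0,\ldots ,v'_d]\subset CX'$ as a Euclidean $(d+1)$-simplex $T$ with apex $v_0^\ast$ and edges $v_0^\ast v_i'^\ast$ of length $L_i=|\gamma_{v'_i}|$. Foliate $T$ by $d$-simplex slices parallel to the face opposite the apex and collect ``slice events'' $(i,j)$ at slice fractions $s^i_j=t^{v'_i}_j/L_i$ sorted by $s$; between consecutive events the $d+1$ current slice vertices $(v^{v'_0}_{j_0},\ldots ,v^{v'_d}_{j_d})$ are constant, and at the event $(i,j)$ only the $i$-th entry jumps from $v^{v'_i}_{j-1}$ to $v^{v'_i}_j$, yielding a $(d+1)$-simplex whose $d+2$ vertices are the union of the two consecutive slice tuples. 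The main obstacle is to prove by induction on events that the $d+1$ current vertices at every slice are pairwise adjacent in $K$: the base tuple $(v_0,\ldots ,v_0)$ at $s=0$ is trivially pairwise adjacent, and at event $(i,j)$ only the adjacencies between the newcomer $v^{v'_i}_j$ and each $v^{v'_{i'}}_{j_{i'}}$ for $i'\neq i$ must be verified. These are the higher-dimensional analogue of Lemma~\ref{claim1}: CAT(0) comparison in $T$ together with $\mathrm{diam}\,g(\sigma')\leq\delta$ yields $d_K(\gamma_{v'_i}(t^{v'_i}_j),\gamma_{v'_{i'}}(sL_{i'}))\leq 0.5\epsilon$, so combined with $\gamma_{v'_i}(t^{v'_i}_j)\in N(v^{v'_i}_j)$ (from construction) and $v^{v'_{i'}}_{j_{i'}}\in\Delta(\gamma_{v'_{i'}}(sL_{i'}))^{(0)}$ (from the validity range on path $i'$), the $\epsilon$-thick shape hypothesis forces $v^{v'_i}_j\in\Delta(\gamma_{v'_{i'}}(sL_{i'}))^{(0)}$, placing $v^{v'_i}_j$ and $v^{v'_{i'}}_{j_{i'}}$ in a common simplex of $K$. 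Flagness of $K$ then upgrades pairwise adjacency of the $d+2$ event vertices into the required $(d+1)$-simplex.

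Because each $p_v$ depends only on $v$ and $v_0$, the cone fillings over top-simplices of $X'$ sharing a codimension-one face induce the same triangulation on the shared cone face and glue into a triangulation $P'$ of $CX'$ equipped with a simplicial map $P'\to K$. Each cone filling uses at most $\sum_{i=0}^{d}n(v'_i)+O(1)=O(|X|)$ top-simplices, so $|P'|=O(|X'|\cdot|X|)=O(|X|^2)$. Finally, exactly as in the last paragraph of the proof of Theorem~\ref{thmcat}, I would attach $O(|X|)$ further boundary simplices to collapse the $X'$-triangulation of $\partial P'$ back to the original $X$-triangulation (matching $w_v=g(v)$ whenever $v\in X^{(0)}$ and collapsing the extra subdivision vertices), producing the desired $P$ with $g'|_X=g$ and $|P|=O(|X|^2)$, well within the claimed quadratic bound.
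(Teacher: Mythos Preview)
Your proposal is correct and follows essentially the same approach as the paper: subdivide via $d$-bounded shapes, build radial combinatorial paths using the CAT(0) geodesics from a basepoint, triangulate each cone block via the higher-dimensional analogue of Lemmas~\ref{claim1}--\ref{claim2}, glue, and then repair the boundary. Your ``slice event'' argument is exactly the explicit version of what the paper compresses into ``a straightforward generalisation of Lemmas~\ref{claim1}~and~\ref{claim2}''; the only cosmetic remark is that the Euclidean $(d+1)$-simplex $T$ serves purely as a parametrisation device, and the inequality $d_K(\gamma_{v'_i}(sL_i),\gamma_{v'_{i'}}(sL_{i'}))\leq 0.5\epsilon$ comes from pairwise CAT(0) convexity of the metric (i.e.\ from comparison triangles, not from an honest comparison $(d+1)$-simplex).
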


\begin{proof}
We follow the proof of Theorem~\ref{thmcat}, which is essentially a prerequisite. The first step is to subdivide $K^{(d)}$ into simplices of diameter at most $0.5\epsilon$, which is the only place in the proof where we require that $K$ is $d$-bounded. Such a subdivision pulls back to $X$, and this triangulation of $X$ we call $P''$, and there is a natural topological map $h\colon P''\to K$. For each vertex $u$ of $P''$ we pick $w\in K^{(0)}$ such that $h(u)\in N(w)$---this defines a map $g''\colon P''\to K$ by setting $g''(u)=w$ and by $\epsilon$-thickness we have that $g''$ is a simplicial map.

Let $w_0$ be an arbitrary vertex in the image (under $g$) of $X$ in $K$. We wish to find a triangulation $P'''$ of the cone $CX$ where the induced triangulation on $X\times \{1\}$ is $P''$, and have a simplicial map $g'''\colon P''' \to K$ extending $g''$. As a start we write $u_0$ for the vertex endpoint of the cone $CX$ (i.e. the point representing $X\times \{0\} / \sim$) and set $g'''(u_0)\coloneq w_0$.

We now want to construct many combinatorial paths between $u_0$ and $u\in P''$, and send them to combinatorial paths in $K$. We then fix a vertex $u$ of $P''$. We take the geodesic in the CAT(0) metric between $h(u_0)$ and $h(u)$. We define these combinatorial paths just as before in Theorem~\ref{thmcat}. Now let $u_1,\ldots,u_{d+1}$ span a simplex $\Delta$ in $P''$, we need to ensure that the combinatorial paths between $u_0$ and the $u_i\in \Delta$ extend to a triangulation of $C\Delta$ and a simplicial map $g'''_{C\Delta} \colon C\Delta \to K$. The construction and proof of this is a straightforward generalisation of Lemmas~\ref{claim1}~and~\ref{claim2}. This then provides a triangulation $P'''$ of $CX$ and a simplicial map $g'''\colon P''' \to K$. The number of $(d+1)$-simplices of $P'''$ is at most a uniform multiplicative constant multiple of $|X|$, multiplied by the diameter of $X$, which is again at most $|X|$, hence $O(|X|^2)$.

However $g'''$ is not necessarily equal to $g$ restricted to $X\times \{1\}$. Just as we did at the end of the proof of Theorem~\ref{thmcat} we add triangles to $P'''$ to find the required $P$ and $g'$. This adds on more $(d+1)$-simplices to $CX$, but at most some uniform constant multiple of $|X|$.\end{proof}

\section{Contractible, hyperbolic complexes with no combinatorial isoperimetric inequality} \label{nocomb}

In this section we prove the following theorem.

\begin{thm} \label{bigthm}
Let $K$ be one of the following complexes.
\begin{enumerate}
\item The arc complex $\mathcal{A}(S_{g,p})$ where $g\geq 2$ and $p\geq 2$, or, $g=1$ and $p\geq 4$, or $g=0$ and $p\geq 6$.
\item The disc complex $\mathcal{D}_n$ of a handlebody of genus $n\geq 5$.
\item The free splitting complex $\mathcal{FS}_n$ of a free group of rank $n\geq 5$.
\end{enumerate}
Then there is a family of loops $c_N$ of combinatorial length $4$ in $K^{(1)}$ such that the following holds. Whenever $P$ is a triangulation of a surface with one boundary component and $f\colon P \to K^{(2)}$ is a simplicial map where $f|_{\partial P}$ maps bijectively onto $c_N$ then $P$ must have at least $N$ triangles.
\end{thm}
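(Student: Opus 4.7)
My approach is through a Stokes-style argument. For each complex $K$, I would construct a $\mathbb{Z}$-valued $1$-cochain $\omega$ on $K^{(1)}$ whose coboundary $d\omega$ is bounded in absolute value by some constant $C$ on every $2$-simplex of $K$, together with a family of loops $c_N$ of combinatorial length $4$ on which $\omega$ evaluates to at least $CN$. For any simplicial filling $f \colon P \to K^{(2)}$ with $f|_{\partial P}$ mapping bijectively onto $c_N$, pulling back $\omega$ to $P$ and applying the discrete Stokes formula gives
\[
CN \;\leq\; |\omega(c_N)| \;=\; \Bigl|\sum_{T \in P^{(2)}} d\omega\bigl(f(T)\bigr)\Bigr| \;\leq\; C \cdot |P^{(2)}|,
\]
so $|P^{(2)}| \geq N$, which is the conclusion.

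For the arc complex case, I would construct $c_N$ as follows. Fix a nonseparating simple closed curve $\gamma \subset S$ and an essential arc $\alpha$ with $i(\alpha, \gamma) = 1$; set $\alpha_N \coloneq T_\gamma^N(\alpha)$, the image of $\alpha$ under the $N$-th Dehn twist about $\gamma$. The hypotheses on $(g,p)$ are tuned so that $S \setminus \mathcal{N}(\alpha \cup \gamma)$ still carries two essential arcs $\beta_1, \beta_2$, which are then automatically disjoint from both $\alpha$ and $\alpha_N$ because $\alpha_N$ lies in $\mathcal{N}(\alpha \cup \gamma)$. To rule out a two-triangle filling along the diagonal $\{\beta_1,\beta_2\}$, I would additionally choose $\beta_1,\beta_2$ to intersect one another essentially, so that $\{\beta_1,\beta_2\}$ is not an edge of $K$. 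The loop is $c_N \coloneq (\alpha, \beta_1, \alpha_N, \beta_2)$.

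The cochain $\omega$ is built from the annular subsurface projection $\pi_\gamma$ of Masur--Minsky onto $\mathcal{A}(\tilde{A}_\gamma) \cong \mathbb{Z}$. For vertices $u$ whose arc intersects $\gamma$ essentially, $\pi_\gamma(u) \in \mathbb{Z}$ is defined up to bounded ambiguity; for edges $(u,v)$ between two such vertices, set $\omega(u,v) \coloneq \pi_\gamma(v) - \pi_\gamma(u)$, which is uniformly bounded because disjoint arcs have annular projections within bounded distance (Behrstock's inequality, or equivalently the Lipschitz property of subsurface projection). The delicate step is the extension of $\omega$ to edges with at least one endpoint disjoint from $\gamma$, in a manner that keeps the coboundary bounded on every $2$-simplex while producing $\omega(c_N) \asymp N$. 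I expect this to be the heart of the argument: for a $2$-simplex with all three vertices projecting the coboundary telescopes (up to the Behrstock defect), but for a $2$-simplex with a non-projecting vertex the extension must be chosen to record the "missing" twist coordinate—typically via a counter-term pegged to $\pi_\gamma$ of a transverse reference arc—so that the contribution on such triangles remains uniformly bounded while $\omega$ on $c_N$ picks up the full $N$ from the $\pi_\gamma$-displacement between $\alpha$ and $\alpha_N$.

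The same framework applies to $\mathcal{D}_n$ and $\mathcal{FS}_n$, using the appropriate twist-type automorphism in place of $T_\gamma$ (a disc twist about a meridional annulus for $\mathcal{D}_n$; a Nielsen transformation for $\mathcal{FS}_n$) together with the corresponding projection machinery---Masur--Schleimer's annular/disc projections for $\mathcal{D}_n$, and the subfactor projections of Handel--Mosher and Bestvina--Feighn for $\mathcal{FS}_n$. The hypotheses $n \geq 5$ play the same role as $g \geq 2, p \geq 2$: they guarantee enough combinatorial room in the complement of the twist data for the two side vertices $\beta_1,\beta_2$ to be found, intersecting as required to obstruct a two-triangle filling. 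The main obstacle throughout is the construction and verification of the bounded-coboundary cochain $\omega$, especially in handling edges where one endpoint has undefined projection; this is where the specific subsurface/subfactor projection machinery of each complex is essential.
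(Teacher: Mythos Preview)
Your Stokes/cochain strategy is quite different from the paper's, and unfortunately the specific loop you propose does \emph{not} require many triangles. Because $\alpha_N=T_\gamma^N\alpha$ is supported in $\mathcal N(\alpha\cup\gamma)$, every essential arc of $S$ lying in $\Sigma=S\setminus\mathcal N(\alpha\cup\gamma)$ is disjoint from both $\alpha$ and $\alpha_N$. Your $\beta_1,\beta_2$ lie in $\Sigma$ by construction; take any path $\beta_1=\epsilon_0,\epsilon_1,\dots,\epsilon_D=\beta_2$ among such arcs, where $D$ is their distance in this subcomplex---a finite number you have only required to be at least~$2$, and which is independent of $N$ since $\alpha,\gamma,\beta_1,\beta_2$ are all fixed. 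The $2D$ triangles $(\alpha,\epsilon_i,\epsilon_{i+1})$ and $(\alpha_N,\epsilon_i,\epsilon_{i+1})$ glue along the path $(\epsilon_i)$ to a simplicial disc with boundary exactly $c_N$. Hence $c_N$ is fillable with a number of triangles bounded independently of $N$, and by your own Stokes inequality no $1$-cochain with bounded coboundary can pair to $\asymp N$ against it. This is why the extension you flag as ``the heart of the argument'' cannot be carried out: the obstruction is not a technicality but the fact that your $c_N$ is cheap to fill. (A symptom: any extension forced by bounded coboundary on triangles $(\beta,\alpha_1,\alpha_2)$ with $\beta$ missing $\gamma$ makes $\omega(\beta,\alpha)\approx\pi_\gamma(\alpha)-\phi(\beta)$ for some $\phi$, and then $\omega(c_N)$ telescopes to~$0$.)

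The paper's loop avoids this collapse by making \emph{both} opposite pairs far apart, in complementary subsurfaces. One takes a \emph{separating} curve $\gamma$ with $S\setminus\gamma=Y\sqcup Z$, each side carrying a puncture and having $\xi\ge 1$, and chooses $a^Y_1,a^Y_2\subset Y$ with $d_{\mathcal A(Y)}(a^Y_1,a^Y_2)\ge 3N$ and $a^Z_1,a^Z_2\subset Z$ with $d_{\mathcal A(Z)}(a^Z_1,a^Z_2)\ge 3N$; the square is $(a^Y_1,a^Z_1,a^Y_2,a^Z_2)$. Every arc of $S$ cuts $Y$ or cuts $Z$, so one $2$-colours the vertices of any filling $P$ (red if cutting $Y$, blue otherwise) and runs Gale's Hex-theorem argument on $P$ to produce a monochromatic path in $P^{(1)}$ between the matching boundary pair. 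Projecting that path via $\kappa_Y$ or $\kappa_Z$ yields a path of the same length in $\mathcal A(Y)$ or $\mathcal A(Z)$, hence of length $\ge 3N$, hence $P$ has $\ge N$ triangles. Parts (2) and (3) are not redone from scratch with disc/subfactor projections as you suggest; they are deduced from part~(1) by composing any filling with a $1$-Lipschitz simplicial map back to $\mathcal A(S)$---subsurface projection to $S\times\{0\}\subset\partial H$ for $\mathcal D_n$, and the Hamenst\"adt--Hensel $1$-Lipschitz retraction $\mathcal{FS}_n\to\mathcal A(S)$ for the free splitting complex.
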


Combining this with Theorem~\ref{thmcat} and Lemma~\ref{generalize} we have

\begin{cor}
Let $K$ be as in Theorem~\ref{bigthm}. Then $K$ does not admit a CAT(0) metric with finitely many shapes. Furthermore it does not admit one with bounded, thick shapes. \hfill $\square$
\end{cor}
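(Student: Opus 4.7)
The plan is a short proof by contradiction, combining the two main ingredients already in place: the quantitative combinatorial isoperimetric failure of $K$ from Theorem~\ref{bigthm} and the quadratic combinatorial isoperimetric bound for CAT(0) complexes with bounded, thick shapes from Theorem~\ref{thmcat}. I will first establish the stronger assertion (no CAT(0) metric with bounded, thick shapes) and then deduce the finitely-many-shapes version as an immediate consequence via Lemma~\ref{generalize}.

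Suppose for contradiction that $K$ admits a CAT(0) metric with bounded, thick shapes. By Theorem~\ref{thmcat}, $K$ then satisfies a quadratic combinatorial isoperimetric inequality, so there is a combinatorial isoperimetric bound $f \colon \mathbb{N} \to \mathbb{N}$ with $f(n) = O(n^2)$. In particular, setting $C \coloneq f(4)$, every combinatorial loop of length $4$ in $K^{(1)}$ can be capped off by a triangulation of the disc $D^2$ using at most $C$ triangles. Now apply Theorem~\ref{bigthm} with $N = C+1$: this produces a loop $c_N$ of combinatorial length $4$ in $K^{(1)}$ such that any simplicial map $f \colon P \to K^{(2)}$ from a triangulation $P$ of a surface with one boundary component, restricting to a bijection $\partial P \to c_N$, requires at least $N = C+1$ triangles. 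Since the disc $D^2$ is a surface with one boundary component, the hypothetical cap-off from Theorem~\ref{thmcat} is a simplicial map of exactly this type with at most $C$ triangles, a contradiction.

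This establishes the second (and stronger) assertion. For the first assertion, if $K$ admitted a CAT(0) metric with finitely many shapes, then Lemma~\ref{generalize} would upgrade this to a CAT(0) metric with bounded, thick shapes, contradicting what was just shown. There is no genuine obstacle in the argument; the real content is entirely loaded into Theorem~\ref{bigthm} and Theorem~\ref{thmcat}, and the role of this corollary is simply to record that those two ingredients snap together exactly at length-$4$ loops, because the isoperimetric lower bound produced in Theorem~\ref{bigthm} is unbounded while the upper bound coming from the CAT(0) hypothesis depends only on the fixed length $4$.
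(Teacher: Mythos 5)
Your argument is correct and is exactly the intended one: the paper proves this corollary by simply combining Theorem~\ref{bigthm} with Theorem~\ref{thmcat} and Lemma~\ref{generalize}, which is precisely the contradiction you spell out (noting, as you do, that the disc is in particular a surface with one boundary component, so the cap-off is covered by Theorem~\ref{bigthm}). Nothing is missing.
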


\subsection{Most arc complexes}

We write $S_{g,p}$ for the orientable surface with genus $g$ and $p$ punctures/marked points.

\begin{proof}[Proof of Theorem~\ref{bigthm}(1)]

The hypothesis on $g$ and $p$ ensures that there exists an essential, non-peripheral simple closed curve $\gamma$ on $S=S_{g,p}$ such that $S-\gamma$ consists of two connected subsurfaces $Y$ and $Z$, both of which have at least one puncture (which is also a puncture of $S$), and $\xi(Y),\xi(Z)\geq 1$ (here $\xi(S_{g,p})=3g+p-3$, which coincides with the number of curves in a pants decomposition). Therefore $\mathcal{A}(Y)$ and $\mathcal{A}(Z)$ both have infinite diameter.

Pick an arbitrary integer $N\geq 2$. There exist arcs $a^Y_1$, $a^Y_2$, $a^Z_1$ and $a^Z_2$ of $S$ such that \begin{enumerate}
\item $a^Y_1,a^Y_2\subset Y$ and $a^Z_1,a^Z_2\subset Z$, and
\item $d_{\mathcal{A}(Y)}(a^Y_1,a^Y_2)\geq 3N$ and $d_{\mathcal{A}(Z)}(a^Z_1,a^Z_2)\geq 3N$.
\end{enumerate}

Note that the graph spanned by $a^Y_1$,  $a^Z_1$, $a^Y_2$ and $a^Z_2$ in $\mathcal{A}(S)$ is a loop $c_N$ whose combinatorial length is equal to $4$.

Suppose that there is a triangulation $P$ of a surface with one boundary component (for example a disc) and a simplicial map $f \colon P \to \mathcal{A}(S)^{(2)}$ such that $f|_{\partial P}$ coincides with $c_N$. Let us write $\tilde a^Y_i$ for $(f|_{\partial P})^{-1}a^Y_i$ and similarly $\tilde a^Z_i$ for $(f|_{\partial P})^{-1}a^Z_i$. Then $\tilde a^Y_1$, $\tilde a^Y_2$, $\tilde a^Z_1$ and $\tilde a^Z_2$ are the four vertices on the boundary of $P$.

We will colour the vertices of $\mathcal{A}(S)$ then pullback this colouring to give a colouring of the vertices of $P$. First we need some terminology. We say that an arc $a$ \textit{cuts} $Y$ if every representative of $a$ intersects $Y$. If $a$ cuts $Y$ then we can define $\kappa_Y(a)$ a simplex of $\mathcal{A}(Y)$ (see the definition of $\pi_Y'$ in \cite[p.~918]{MasurMinskyII}). If $a$ does not cut $Y$ then we say that $a$ \textit{misses} $Y$. If $a$ cuts $Y$ then we colour $a$ red. If $a$ is red then $\kappa_Y(a)$ is defined and it is a collection of essential arcs in $Y$. If $a$ misses $Y$ then we colour $a$ blue. If $a$ is blue then $a$ must cut $Z$ and therefore $\kappa_Z(a)$ is defined and it is a collection of essential arcs in $Z$. Each vertex is coloured either red or blue (but not both).

We claim that there is a red path between $\tilde a^Y_1$ and $\tilde a^Y_2$ in $P^{(1)}$ or a blue path between $\tilde a^Z_1$ and $\tilde a^Z_2$ in $P^{(1)}$. This follows directly from the proof of the $2$-dimensional Hex Theorem given in \cite[p.~820]{Gale}. We recall that beautiful proof here. We construct a graph $G$ with four vertices (illustrated as squares in Figure~\ref{hex}) with additional vertices that correspond to the triangles of $P$. Then we add edges to the vertices according to the rule depicted on the left of Figure~\ref{hex}; one imagines one colour as land and the other colour as the sea, and these edges represent the cliff between the two. Each vertex in $G$ has degree at most $2$ but there are precisely four vertices of degree $1$. Therefore there exists a path between two square vertices in $G$, see Figure~\ref{hex}. The required red or blue path can be constructed directly from this.

But by definition of $a^Y_1$, $a^Y_2$, $a^Z_1$ and $a^Z_2$, any such monochromatic path has length at least $3N$. This is because a red path of length $\ell$ can be used via the projection $\kappa_Y$ to construct a path of length at most $\ell$ in $\mathcal{A}(Y)$ between  $a^Y_1$ and $a^Y_2$, and similarly for a blue path in $\mathcal{A}(Z)$. Therefore there are at least $3N$ edges of $P$. If we count each triangle three times then we count each edge at least once, and so $P$ must have at least $N$ triangles. But $N\geq 2$ was arbitrary, and our loop had combinatorial length $4$, so we are done.
\end{proof}

\begin{figure}
\begin{center} \def\svgwidth{250pt}
\executeiffilenewer{hex.svg}{hex.pdf}%
{inkscape -z -D --file=hex.svg %
--export-pdf=hex.pdf --export-latex}%
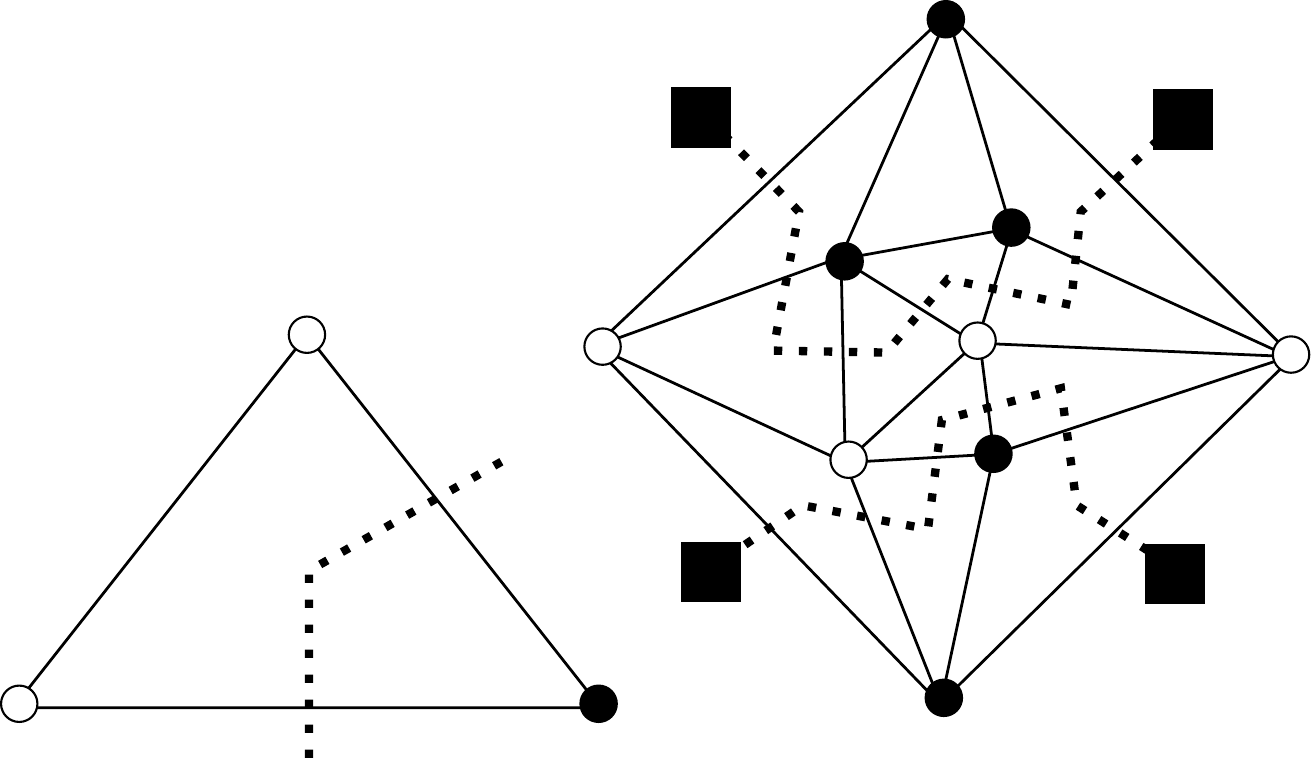%

\caption{Left: How the subgraph of the dual graph of $P$ is constructed locally. Right: An example construction.}
\label{hex}
\end{center}
\end{figure}

\subsection{All but finitely many disc complexes of handlebodies}

\begin{proof}[Proof of Theorem~\ref{bigthm}(2)]
The surface $S=S_{0,p}$ has Euler characteristic $1-(p-1)$, so the rank of its (free) fundamental group is $p-1$. The $3$-manifold $H=S\times [0,1]$ is a handlebody of genus $p-1$ and so its boundary $\partial H$ is a closed surface of genus $p-1$. Now let us set $n+1=p \geq 6$.

Given $N\geq 2$, we take the arcs $a^Y_1$, $a^Y_2$, $a^Z_1$ and $a^Z_2$ for $S=S_{0,p}$ as constructed in the proof of Theorem~\ref{bigthm}(1). The loop $c_N$ for the disc complex that we take is the one spanned by $a^Y_1 \times [0,1]$, $a^Y_2 \times [0,1]$, $a^Z_1 \times [0,1]$ and $a^Z_2 \times [0,1]$. Now we show that we require at least $N$ triangles for any such triangulation $P$ and simplicial map $f \colon P \to \mathcal{D}_n^{(2)}$ as in the statement of the theorem.

 Let $X$ be the subsurface of $\partial H$ corresponding to $S\times \{ 0\}$. Then $\kappa_X$ defines a map $\mathcal{D}_n^{(1)} \to \mathcal{A}(S)^{(1)}$ via $D\mapsto \kappa_X \partial D$, which is defined for all discs $D\in\mathcal{D}_n$ because the boundary $\partial D$ of every essential disc $D$ cuts $X=S\times\{0\}$. If $D_1$ and $D_2$ were disjoint then so are $\kappa_{X}\partial D_1$ and $\kappa_{X}\partial D_2$, and so by taking an arbitrary arc in each $\kappa_{X}\partial D$ we may define a $1$-Lipschitz map from $\mathcal{D}_n^{(1)}$ to $\mathcal{A}(S)^{(1)}$, which we call $g$.

Note that $g$ sends the loop $c_N$ to the original loop spanned by $a^Y_1$, $a^Y_2$, $a^Z_1$ and $a^Z_2$ in the arc complex.

Therefore given $f \colon P \to \mathcal{D}_n^{(2)}$ the composition $gf \colon P \to \mathcal{A}(S)^{(2)}$ provides us with a simplicial map where $(gf)|_{\partial P}$ coincides with the original loop of length $4$ in the arc complex. Therefore $P$ has at least $N$ triangles as in the proof of Theorem~\ref{bigthm}(1). \end{proof}

\subsection{All but finitely many free splitting complexes of free groups} \label{sec:freesplit}

\begin{proof}[Proof of Theorem~\ref{bigthm}(3)]

We require a fascinating result of Hamenst\"adt~and~Hensel.

\begin{prop}[Proposition~4.18 of \cite{HamenstadtHensel}] \label{hh} Let $S$ be a compact surface with genus $g$ and $|\partial S|=b$. There is a canonical $1$-Lipschitz embedding $\mathcal{A}(S) \to \mathcal{FS}_n$ where $n=2g+b-1$, and there exists a $1$-Lipschitz left inverse $\mathcal{FS}_n \to \mathcal{A}(S)$.
\end{prop}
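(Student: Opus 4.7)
Since $S$ is a compact surface with $b\geq 1$ boundary components, its fundamental group is free of rank $n=2g+b-1$, so fix an isomorphism $\pi_1(S)\cong F_n$. The plan is to construct the embedding by passing through Bass--Serre trees: each essential arc dually determines a one-edge free splitting, and disjoint arcs determine compatible splittings. Conversely, every free splitting admits a transverse realization on $S$ that yields an arc, giving the left inverse.

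First I would define the embedding $\iota\colon \mathcal{A}(S)\to \mathcal{FS}_n$ on vertices. Given an essential arc $a$, cutting $S$ along $a$ gives either two components $S_1\sqcup S_2$ (in which case $\pi_1(S)=\pi_1(S_1)*\pi_1(S_2)$) or a single component $S'$ (in which case $\pi_1(S)=\pi_1(S')*\mathbb{Z}$), since each piece retracts onto a graph and the arc meets $\partial S$ transversely in two points with trivial ``edge group.'' Either way, one obtains a one-edge free splitting of $F_n$, i.e.\ a vertex of $\mathcal{FS}_n$. A simplex $\{a_0,\ldots,a_k\}$ of disjoint essential arcs maps to the dual Bass--Serre tree of the decomposition obtained by cutting $S$ along all of them simultaneously; this is a refinement of each individual $\iota(a_i)$, so $\iota$ extends simplicially. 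To check $\iota$ is $1$-Lipschitz it suffices to note that disjoint arcs always admit a common refinement as above, so adjacent vertices of $\mathcal{A}(S)$ go to adjacent (or equal) vertices of $\mathcal{FS}_n$. Injectivity on vertices comes from the fact that the dual graph-of-spaces decomposition lets one recover the arc up to isotopy from the topological realization of the splitting on $S$; I would recover the arc as the unique (up to isotopy) two-sided essential properly embedded $1$-submanifold in $S$ whose dual splitting equals $\iota(a)$, using topological rigidity / Stallings-style folding.

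For the left inverse $\rho\colon \mathcal{FS}_n\to \mathcal{A}(S)$, the natural construction is via equivariant transversality. Given a free splitting represented by a simplicial $F_n$-tree $T$, there exists a $\pi_1(S)$-equivariant continuous map $\widetilde{S}\to T$ (since $T$ is contractible and the action on $T$ is free on edges). Homotoping this map to be transverse to midpoints of edges of $T$, the preimage of the union of midpoints is an $F_n$-invariant, properly embedded $1$-submanifold of $\widetilde{S}$, each of whose components is either a line with endpoints on $\partial \widetilde{S}$ or a closed curve. Descending to $S$ yields a finite system $\Sigma$ of essential disjoint properly embedded arcs and simple closed curves realizing $T$. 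I would then define $\rho(T)$ to be any essential arc appearing in $\Sigma$; for a collection of refining splittings (a simplex) one can do all of this simultaneously to get a simplex of $\mathcal{A}(S)$. Finally, $\rho\circ\iota = \mathrm{id}$ because an arc system already realizes its own dual splitting transversally, and $\rho$ is $1$-Lipschitz because two adjacent splittings have a common refinement which can be simultaneously realized on $S$, producing nested arc systems of the two.

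The main obstacle is the last step for the left inverse: guaranteeing that the transverse realization $\Sigma$ always contains an arc, and that the resulting $\rho$ is well-defined (independent of the choice of equivariant map and of the choice of arc in $\Sigma$, at least up to adjacency in $\mathcal{A}(S)$). The existence of an arc in $\Sigma$ is where the hypothesis $b\geq 1$ is essential: if $\Sigma$ consisted only of simple closed curves, then $\partial S$ would be carried into the complementary regions of $\Sigma$ in $S$ and therefore would be elliptic in $T$; but one can arrange, by a preliminary equivariant homotopy pushing $\partial\widetilde{S}$ across midpoints, that this does not happen when $T$ has any nontrivial splitting separating the boundary conjugacy classes, and in the remaining case the curves themselves already show the splitting is induced from a smaller-genus subsurface, where induction on complexity applies. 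Well-definedness up to adjacency will follow from the fact that any two transverse realizations of $T$ on $S$ differ by a sequence of equivariant elementary homotopies, each of which preserves the disjointness class of the resulting arc system.
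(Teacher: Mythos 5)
First, note that the paper does not prove this proposition at all: it is quoted as Proposition~4.18 of Hamenst\"adt--Hensel, with a remark that their statement covers $b=1$ and that the general case follows by minor modifications (or by fixing a pants decomposition and defining a tight minimal position on each pair of pants). The paper also explicitly flags the feature that your argument misses: the $1$-Lipschitz left inverse is \emph{not} canonical --- it depends on a choice of maximal arc system of $S$ --- and the canonical projections in the literature (Bowditch--Iezzi, Forlini) are only \emph{coarsely} Lipschitz. Your embedding half is the standard dual-splitting construction and is essentially fine (and your worry about circle components is a non-issue: an essential circle in the transverse preimage would have its fundamental group carried into a trivial edge stabilizer, so after homotopy $\Sigma$ consists of arcs only).

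The genuine gap is in the left inverse. Defining $\rho(T)$ to be ``any essential arc appearing in a transverse realization $\Sigma$ of $T$'' is not well defined even coarsely: the realization depends on the choice of equivariant map $\widetilde S\to T$, and two such maps differ by an equivariant homotopy whose effect on the preimage of the midpoints is a sequence of surgeries (pushing sheets of the surface across vertices of $T$), which do \emph{not} preserve the isotopy or disjointness class of the resulting arc system. The set of arcs occurring in resolutions of a fixed $T$ has unbounded diameter in $\mathcal{A}(S)$ in general; if it did not, the coarse projections of Bowditch--Iezzi and Forlini would be immediate and there would be no need for Hamenst\"adt--Hensel's normal form. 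Your final sentence (``well-definedness up to adjacency will follow\ldots'') therefore asserts exactly the false step. Relatedly, your Lipschitz argument for $\rho$ only shows that adjacent splittings admit \emph{some} pair of compatible realizations, not that the realizations you independently chose for each vertex are disjoint. The fix is the one the paper points to: fix a maximal arc system (or pants decomposition) of $S$ once and for all, put each splitting into a tight minimal position relative to it, and read off the arc from that normal form; this sacrifices canonicity but yields a genuine $1$-Lipschitz left inverse with $\rho\circ\iota=\mathrm{id}$.
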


It is worth remarking that while the embedding is canonical their $1$-Lipschitz left inverse is not because it depends on a choice of maximal arc system of $S$. For our purposes a $1$-Lipschitz left inverse is crucial. It is important to note that canonical \textit{coarsely} Lipschitz left inverses to this map have been given by Bowditch~and~Iezzi \cite{BowditchIezzi} and by Forlini \cite{Forlini} (who also addresses the arc-and-curve complex and the cyclic splitting complex).

We note that \cite[Proposition~4.18]{HamenstadtHensel} is formally stated only for the case $b=1$. It is remarked \cite[Remark~4.1]{HamenstadtHensel} that minor modifications should enable the general case that we require here. Another proof via their ideas can be given using a fixed pants decomposition of $S$, and then defining a kind of tight minimal position on each pair of pants.

The 1-Lipschitz left inverse enables us to argue analogously to that of the case of the disc complex. We take the loops from Theorem~\ref{bigthm}(1) and embed them into $\mathcal{FS}_n$. Any disc bounding such a loop can be mapped via Proposition~\ref{hh} back into $\mathcal{A}(S)$, so we are done. \end{proof}

\section{Complexes satisfying a linear combinatorial isoperimetric inequality} \label{linearsec}

In this section we prove

\begin{thm} \label{linearcombi} Let $K$ be one of the following complexes. \begin{enumerate}
\item The curve complex $\mathcal{C}(S)$ with $S=S_{g,p}$ and $3g+p-3\geq 2$.
\item The arc-and-curve complex $\mathcal{AC}(S)$.
\end{enumerate}
Then $K$ satisfies a linear combinatorial isoperimetric inequality.
\end{thm}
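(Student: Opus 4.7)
Both $\mathcal{C}(S)$ and $\mathcal{AC}(S)$ are $\delta$-hyperbolic by Masur--Minsky, so by the standard principle recorded in the remark after Definition~\ref{cii}, it suffices to exhibit a constant $B=B(S)$ such that every combinatorial loop of length at most $16\delta$ in $\mathcal{C}(S)^{(1)}$ (resp.\ $\mathcal{AC}(S)^{(1)}$) bounds a disc of at most $B$ triangles. Given such a bound, the linear coarse isoperimetric inequality for hyperbolic spaces upgrades it to a linear combinatorial isoperimetric inequality.

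The plan for filling short loops is to invoke the Masur--Minsky tightening procedure from the proof of \cite[Lemma~4.5]{MasurMinskyII}. Recall that along a geodesic $v_0,v_1,\ldots,v_k$ of $\mathcal{C}(S)$, tightening replaces each intermediate $v_i$ by (a component of) $\partial F(v_{i-1}\cup v_{i+1})$, where $F(\cdot)$ is the filled subsurface; the resulting sequence is a \emph{tight} geodesic. The key input I would extract is the Masur--Minsky--Bowditch finiteness: between any two vertices at a fixed distance there are only finitely many tight geodesics. This finiteness is what turns a bounded-length (but a priori vast) filling problem into a bounded one.

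Given a short loop $c=(v_1,\ldots,v_k)$ with $k\leq 16\delta$, I would first fix a basepoint $v_1$ and choose, for each $v_i$, a tight geodesic $\tau_i$ from $v_1$ to $v_i$ of length $\leq 8\delta$ (guaranteed by hyperbolicity). The $\tau_i$'s together with the edges of $c$ partition the disc-to-be-constructed into $O(\delta)$ tight triangles whose sides all have length $\leq 16\delta$. The heart of the argument is then to show that each such tight triangle bounds a disc in $\mathcal{C}(S)^{(2)}$ of uniformly bounded size $B_0=B_0(S)$. I would attempt this by induction on the complexity of the subsurface $Y$ filled by the triangle's vertices: when $Y$ is proper in $S$, a boundary component of $Y$ is disjoint from every vertex of the triangle and hence (by the flag property) provides a diagonal reducing the problem to triangles filled in proper subsurfaces of strictly lower complexity; when $Y=S$, all distances between consecutive vertices are uniformly bounded and the Masur--Minsky--Bowditch finiteness yields only finitely many triangle configurations up to $\mathrm{Mod}(S)$, each filled individually. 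Summing over the $O(\delta)$ tight triangles produces the required bound $B$. For $\mathcal{AC}(S)$ the same outline works, using that tightening and the finiteness of tight geodesics extend to the arc-and-curve setting (arcs behave analogously, and the retraction onto $\mathcal{C}(S)$ absorbs arc vertices into adjacent curve vertices at bounded cost).

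The hardest step will be the inductive filling of a single tight triangle: the tightening move outputs a multicurve from which one must choose a component, so to make the induction terminate with a uniformly bounded number of triangles one must control how the filled-subsurface data evolves and, in the base case, enumerate the finitely many tight configurations (whose finiteness we have only up to $\mathrm{Mod}(S)$) to extract a truly universal constant $B_0(S)$. Making this component-selection procedure precise, and verifying that the arc-and-curve variant does not lose anything in the count, is the main technical challenge.
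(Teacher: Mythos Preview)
Your high-level reduction (hyperbolicity $\Rightarrow$ linear coarse isoperimetric inequality $\Rightarrow$ it suffices to uniformly fill loops of length $\leq 16\delta$) matches the paper exactly, and your treatment of $\mathcal{AC}(S)$ by replacing arcs with boundary curves at bounded cost is essentially the paper's Section~\ref{AC}. Where you diverge is in how to fill a short loop in $\mathcal{C}(S)$: you cone from a basepoint via tight \emph{geodesics}, cutting the problem into thin triangles with tight sides, whereas the paper works with the loop itself, introducing a notion of tight \emph{loop} (Definition~\ref{deftightloop}) together with a tightening procedure for loops (Lemma~\ref{lemtighten}) that either produces a shortcut---allowing an induction on the length of the loop---or terminates in a tight loop. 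The paper then proves (Theorem~\ref{thm:finitetight}) that tight loops of each bounded length fall into finitely many $\mathrm{Mod}(S)$-orbits, and simple connectivity finishes.

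Your proposed induction on the complexity of the filled subsurface $Y$ is where the plan wobbles. In the ``$Y$ proper'' case, a component of $\partial Y$ is adjacent to \emph{every} vertex on the triangle, so it cones the whole triangle off in $O(\text{perimeter})$ many $2$-simplices; there is no reduction to subsurfaces and nothing inductive actually happens. Hence all of the content sits in the ``$Y=S$'' case, which you defer to ``Masur--Minsky--Bowditch finiteness''. But Bowditch's result gives finiteness of tight geodesics between two \emph{fixed} endpoints; for a uniform filling bound you need finiteness of $\mathrm{Mod}(S)$-orbits of bounded-length tight configurations, which is a different statement and is precisely what the paper proves (for loops, via the subsurface-projection arguments of \cite{WebbCombinatorics}) in Theorem~\ref{thm:finitetight}. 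Your basepoint-triangle scheme could plausibly be pushed through with that stronger finiteness in hand, but as written the decisive step is cited rather than argued; the paper's direct loop-tightening is cleaner because it avoids the triangle decomposition and packages the shortcut/finiteness dichotomy into a single procedure on the loop.
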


\begin{rem} We only treat the case where $\xi(S)=3g+p-3 \geq 2$ because the omitted cases are either straightforward or well known.  It is a theorem of Harer \cite{Harer} that the curve complex is homotopy equivalent to an infinite wedge of spheres of fixed dimension depending on $g$ and $p$. The only cases we consider in which the original curve complex $\mathcal{C}(S)$ is not simply connected is that of the five-times-punctured sphere $S_{0,5}$ and the two-times-punctured torus $S_{1,2}$. Nonetheless in these cases one can attach pentagons to all (isometrically embedded) loops of length $5$, and this creates a simply-connected (but non-contractible) complex, which is due to Piotr~Przytycki. Note that such loops of length $5$ are unique up to $\mathrm{Mod}(S)$, so cocompactness is preserved. The case of Przytycki's complex and the arc-and-curve complex will be explained in Section~\ref{AC}.
\end{rem}

\begin{proof}[Proof of Theorem~\ref{linearcombi}(1)] Let us discuss the strategy and outline of the proof and then defer the omitted details to Sections~\ref{sectighten}~and~\ref{secfinite}.

The curve graph $\mathcal{C}(S)^{(1)}$ is $\delta$-hyperbolic \cite{MasurMinskyI} and therefore satisfies a linear coarse isoperimetric inequality \cite[Proposition~III.H.2.7]{BridsonHaefliger}. In other words, given an arbitrary loop in $\mathcal{C}(S)^{(1)}$ we may decompose it into a net of linearly many loops of length at most $16\delta$ i.e. linearly many short loops. To prove the theorem, it suffices to show that there exists an a priori bound on the number of triangles required to cap off a short loop. This is the content of the proof. It is not straightforward; the arc complex generally fails to have this property even for loops of length $4$, see Theorem~\ref{bigthm}(1).

We show that after a uniformly bounded amount of homotoping a short loop (i.e. a homotopy across a bounded number of triangles) there is either a shortcut available that divides the loop into two/three smaller ones (then use induction on the length), or, the loop $(\gamma_i)_i$ is a \textit{subpath} of a short, \textit{tight loop} $(C_i)_i$, see Lemma~\ref{lem:wiggle}.

In Section~\ref{secfinite} we show that there are only finitely many short, tight loops $c=(C_i)_i$ up to the action of $\mathrm{Mod}(S)$, see Theorem~\ref{thm:finitetight}. Let $c'=(\gamma_i)_i$ be a \textit{subpath} of $c$ i.e. $\gamma_i$ is a component of $C_i$ for each $i$. Then there are only finitely many $\mathrm{Mod}(S)$-orbits of such $(\gamma_i)_i$ too. Since $\mathcal{C}(S)$ is simply connected, we have that each such $(\gamma_i)_i$ has some way of being capped off with a disc, but there are only finitely many orbits of such loop, and so an a priori bound on the number of triangles required exists and the proof is complete.\end{proof}

\subsection{Homotoping and simplifying a loop via the tightening procedure} \label{sectighten}

Masur~and~Minsky introduced the notion of a tight geodesic and showed that they exist between any pair of vertices in the curve complex (see \cite[Lemma~4.5]{MasurMinskyII}, they were originally called \textit{tight sequences}). We refer to the idea in their proof as the tightening procedure. As we will see, the procedure is slightly more complicated for loops than it is for geodesics. Such a procedure for loops is new.

First we define what we mean by tightening a sequence, and then we give some remarks to make the definition clearer and explain the purpose.

\begin{defn} Suppose that $(C_i)_i$ is a sequence of curve systems (or multicurves) of $S$, such that $C_i$ misses $C_{i+1}$, and such that each component $\gamma_i$ of $C_i$ cuts each component $\gamma_{i+2}$ of $C_{i+2}$. We say that $(C_i)_i$ is \textit{tight} at $j$ if the curve system $C_j$ consists precisely of the essential curves of the boundary of a closed regular neighbourhood of $C_{j-1}\cup C_{j+1}$. We will simply write $\partial (C_{j-1}, C_{j+1})$ for this curve system. If $(C_i)_i$ is not tight at $j$ then we may \textit{tighten} the sequence at $j$ by replacing $C_j$ by $\partial (C_{j-1}, C_{j+1})$.
\end{defn}

\begin{rem} \label{remtighten} \leavevmode \begin{enumerate}[(1)]
\item  The reader may wonder why we are discussing curve systems and not simply curves. If one starts with a geodesic in the curve graph and then starts tightening the sequence then we might replace a curve by a curve system, and so eventually we may have to consider the process of tightening using curve systems also.
\item Our assumption that $C_i$ misses $C_{i+1}$ simply means that the curve systems admit disjoint representatives on $S$. One should compare this to paths in the curve graph. On the other hand, if two isotopy classes do not miss then we say that they \textit{cut}. 
\item We want each component $\gamma_i$ of $C_i$ to cut each component $\gamma_{i+2}$ of $C_{i+2}$, otherwise there is an obvious shortcut from $\gamma_i$ to $\gamma_{i+2}$. Such a shortcut can then be used to cut up our original loop into smaller ones.
\item The curve system $\partial (C_{j-1}, C_{j+1})$ is defined by first taking representatives of $C_{j-1}$ and $C_{j+1}$ that intersect transversely and minimally. Then the set $C_{j-1}\cup C_{j+1}$ is well defined on $S$ up to ambient isotopy (this is well known see for example \cite[Lemma~2.2]{WebbCombinatorics} for a proof). We can then discuss the closed regular neighbourhood $N=N(C_{j-1}\cup C_{j+1})$. Its boundary has at least one essential curve of $S$ because if it did not then $S-N$ would be a collection of open discs and open peripheral annuli, but $C_j$ is an essential curve system of $S$ contained inside $S-N$, a contradiction. We write $\partial (C_{j-1}, C_{j+1})$ for the (non-empty) curve system obtained by taking all such essential curves of $\partial N$.
\item We have that $C_j$ misses $C_j'=\partial(C_{j-1}, C_{j+1})$, and so when we tighten the sequence $(C_i)_i$ at $j$ we are in fact homotoping $C_j$ across a simplex in $\mathcal{C}(S)$ to $C_j'$. Therefore if we take arbitrary components $\gamma_i$ of $C_i$ and $\gamma_j'$ of $C_j'$ then we are homotoping $\gamma_j$ across two triangles to $\gamma_j'$. One of the triangles is spanned by $\gamma_{j-1}$, $\gamma_j$, and $\gamma_j'$; the other is $\gamma_{j+1}$, $\gamma_j$, and $\gamma_j'$. This fact is used in Lemma~\ref{lem:wiggle}.
\item The length of the sequence does not change. Furthermore, in an intuitive but not mathematical sense, $\partial (C_{j-1}, C_{j+1})$ is at most as complicated as $C_j$, and so tightening a sequence is a way of simplifying it.
\end{enumerate}
\end{rem}

Before we treat the harder and more general case let us engage with the loops of length $4$ first to get a feel of where this proof is going. Note that any loop of length $3$ already bounds a triangle in $\mathcal{C}(S)^{(2)}$, and any length less than this is straightforward.

\begin{lem}
Let $c$ be a loop of length $4$ in $\mathcal{C}(S)^{(1)}$. Then $c$ can be capped off with a disc of $2$ or $4$ triangles.
\end{lem}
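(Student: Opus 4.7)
Write $c=(\gamma_1,\gamma_2,\gamma_3,\gamma_4)$, so that consecutive curves (cyclically) are disjoint. The plan is a case split on whether one of the two ``diagonal'' pairs $\{\gamma_1,\gamma_3\}$ or $\{\gamma_2,\gamma_4\}$ admits disjoint representatives in $S$: in the first case I expect a $2$-triangle cap obtained by inserting the diagonal, and otherwise I would apply a single step of the tightening procedure (as in Remark~\ref{remtighten}) to manufacture a new vertex $\mu$ around which four triangles wrap.

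If $\gamma_1$ and $\gamma_3$ (say) are disjoint, they span an edge of $\mathcal{C}(S)$, and the two $2$-simplices $\{\gamma_1,\gamma_2,\gamma_3\}$, $\{\gamma_1,\gamma_3,\gamma_4\}$ form a disc capping off $c$ with $2$ triangles; the case $\{\gamma_2,\gamma_4\}$ is symmetric. Suppose instead that $i(\gamma_1,\gamma_3)>0$ and $i(\gamma_2,\gamma_4)>0$. Fix transverse, minimally intersecting representatives of $\gamma_1\cup\gamma_3$, let $N$ be a closed regular neighbourhood, and pick any component $\mu$ of the essential boundary system $\partial(\gamma_1,\gamma_3)$; such $\mu$ exists by Remark~\ref{remtighten}(4).

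The central claim is that $\mu$ is disjoint (as an isotopy class) from each $\gamma_i$. Disjointness from $\gamma_1,\gamma_3$ is immediate since $\mu\subset\partial N$. For $\gamma_2$ I would invoke the standard fact that a simple closed curve with zero intersection with each of two transverse simple closed curves has a representative disjoint from their union (a bigon-criterion argument, carried out e.g.\ in the complement $S\setminus\gamma_1$, in which $\gamma_2$ and $\gamma_3$ admit disjoint representatives). This places a representative of $\gamma_2$ inside $S\setminus N$, hence disjoint from $\mu$; the argument for $\gamma_4$ is identical. Moreover $\mu$ cannot be isotopic to $\gamma_2$ or to $\gamma_4$, since that would force $\gamma_2$ disjoint from $\gamma_4$, contrary to the standing assumption $i(\gamma_2,\gamma_4)>0$. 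Hence the four $2$-simplices $\{\gamma_i,\gamma_{i+1},\mu\}$ ($i=1,2,3,4$ cyclically) span and fit together into a triangulation of $D^2$ by $4$ triangles whose boundary maps onto $c$.

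The only real subtlety is the disjointness of $\mu$ from the ``transverse'' curves $\gamma_2$ and $\gamma_4$; the remainder is either immediate from the definition of $\partial(\gamma_1,\gamma_3)$ or elementary combinatorial bookkeeping to verify that the four $2$-simplices assemble into a disc.
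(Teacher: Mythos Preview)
Your proof is correct and is essentially the same argument as the paper's: the vertex $\mu$ you cone off around is precisely the component $\gamma$ of $\partial(c_0,c_2)$ that the paper produces, and your four triangles $\{\gamma_i,\gamma_{i+1},\mu\}$ coincide with the paper's. The only difference is packaging: the paper deliberately phrases the second case as ``tighten at one index, then observe a shortcut'' in order to foreshadow the general procedure in Lemma~\ref{lemtighten}, whereas you go directly to the cone-point description.
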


\begin{proof}
We write $c=(c_i)_i$ where $i$ ranges over the integers modulo $4$. We may assume the vertices are distinct. If $c_0$ and $c_2$ miss then there is an edge between them and so $c_0$, $c_1$, $c_2$ form a triangle and so does $c_2$, $c_3$, $c_4=c_0$, and we are done. Similarly we are done with two triangles if $c_1$ and $c_3$ are disjoint. Notice that we are dividing $c$ into two loops of length $3$ here via a shortcut.

Suppose instead that there are no such shortcuts. The plan instead is to consider tightening. Write $C=\partial(c_0, c_2)$. Then we observe that $C$ misses $c_1$, see Remark~\ref{remtighten}(4)~and~(5), furthermore $C$ also misses $c_3$.

We now give an awkward end to the proof for the purpose of indicating how it generalises to larger lengths. We tighten $c$ at the index $1$ to obtain a loop of curve systems $(c_0, C, c_2, c_3)$. We observe that $C$ misses $c_3$. This is a shortcut, so we should aim to divide the loop into two. Let $\gamma$ be an arbitrary component of $C$. Then we may homotope $c$ across two triangles in $\mathcal{C}(S)^{(2)}$ to obtain the loop $c'=(c_0,\gamma,c_2,c_3)$; the triangles mentioned are bounded by $c_0$, $c_1$ and $\gamma$, and, $c_1$, $c_2$ and $\gamma$. But $\gamma$ misses $c_3$ and so we are in the case with a shortcut, and so $c'$ can be capped off with $2$ triangles, and in turn, $c$ can be capped off with $4$ triangles.\end{proof}

In the proof above we had to consider a loop of curve systems. From now on our short loop can be assumed to be a (periodic) sequence of curve systems $(C_i)_i$ where $i$ ranges over the integers modulo $n$, and $n$ is the length of the loop. Of course we are assuming that $C_i$ misses $C_{i+1}$.

Let us make the following definition to tidy up the upcoming statement of Lemma~\ref{lemtighten}.

\begin{defn} Let $c=(C_i)_i$ be a loop of curve systems of length $n\geq 5$. A \textit{shortcut} for $c$ is any one of the following
\begin{enumerate}[(1)]
\item a component $\gamma_j$ of $C_j$ and a component $\gamma_{j+2}$ of $C_{j+2}$ with $d_{\mathcal{C}(S)}(\gamma_j,\gamma_{j+2})<2$,  or,
\item a component $\gamma_j$ of $C_j$ and a component $\gamma_{j+3}$ of $C_{j+3}$ with $d_{\mathcal{C}(S)}(\gamma_j,\gamma_{j+3})<3$, and $n\geq 6$, or,
\item a curve $\gamma$ adjacent to $\gamma_{j-1}$, $\gamma_{j+1}$, and $\gamma_{j+2}$, some components of $C_{j-1}$, $C_{j+1}$, and $C_{j+2}$ respectively.
\end{enumerate}
\end{defn}

We require $n\geq 6$ in the above second case for the following reason. If one has a loop of length $6$ and finds a shortcut of the second type described above, then one can divide the loop into two smaller loops, and then we are done by induction---this doesn't quite work for $n=5$. The third type is only introduced for handling the case $n=5$ and enables us to divide the pentagon into two squares and one triangle. Of course the third type is a strong case of the second type when $n\geq 6$.

\begin{defn} \label{deftightloop} Let $c=(C_i)_i$ be a loop of curve systems of length $n\geq 5$. We say that $c$ is \textit{tight} if
\begin{enumerate}
\item there are no shortcuts for $c$, and,
\item $c$ is tight at all indices.
\end{enumerate}
\end{defn}

We will see in Section~\ref{secfinite} that there are only finitely many tight loops of a given length up to the action of the mapping class group.

\begin{lem}[Tightening procedure for loops] \label{lemtighten}
Let $c=(C_i)_i$ be a loop of curve systems of length $n\geq 5$. Then there is a way of tightening $c$ at most $n$ times, during or at the end of which we either have that
\begin{enumerate}
\item there is a shortcut for $c$, or,
\item $c$ is a tight loop of length $n$.
\end{enumerate}

\end{lem}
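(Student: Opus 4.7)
The plan is to tighten one index at a time, in cyclic order $j = 1, 2, \ldots, n$, for at most $n$ tightenings in total. Before each proposed tightening at index $j$ I compute $C_j' = \partial(C_{j-1}, C_{j+1})$ using the current (already updated) values of $C_{j-1}$ and $C_{j+1}$, and check whether any component of $C_j'$ witnesses one of the three shortcut types: i.e.\ it is close in $\mathcal{C}(S)$ to a component of $C_{j-2}$ or $C_{j+2}$ in the sense of case (1) or (2), or it plays the role of the curve $\gamma$ in the type (3) definition. If such a configuration exists I terminate and output case (1); otherwise I perform the tightening $C_j \leftarrow C_j'$ and move on to index $j+1$.

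The main invariant is: after the step at index $j$, the current loop is tight at every index in $\{1, 2, \ldots, j\}$. Tightness at $j$ itself is immediate from the construction of $C_j'$. The delicate point is that replacing $C_j$ by $C_j'$ could a priori disrupt tightness at $j-1$, which was established one step earlier and amounts to the equality $C_{j-1} = \partial(C_{j-2}, C_j)$. The heart of the proof is the symmetric core claim: absent a shortcut, the equality $C_{k-1} = \partial(C_{k-2}, C_k)$ survives when $C_k$ is replaced by $C_k'$, and likewise $C_{k+1} = \partial(C_k, C_{k+2})$ survives the same replacement. The second half of this claim is what handles the cyclic wrap-around, since the last tightening at index $n$ could in principle disrupt tightness at index $1 \equiv n+1 \pmod{n}$.

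To prove the core claim, note that both $C_k$ and $C_k'$ are disjoint from $C_{k-1}$ and therefore decompose into components lying in the complementary regions of $C_{k-1}$ in $S$. I analyze each such region $Y$ separately: the essential boundary curves of $N(C_{k-2}\cup C_k)$ visible in $Y$ are determined by how $C_k \cap Y$ together with $C_{k-2}\cap Y$ fill $Y$. Absent shortcuts, I aim to show that replacing $C_k\cap Y$ by $C_k'\cap Y$ does not change which subsurface of $Y$ they fill, because any change would exhibit a component of $C_k'$ that, paired with a component of $C_{k-2}$ or $C_{k+2}$, violates one of the three shortcut conditions (distance $<2$, distance $<3$, or shared adjacency to the prescribed triple of curves). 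The main obstacle is executing this subsurface analysis cleanly across all three shortcut types; type (3) is the most delicate since it involves a fourth auxiliary curve and was introduced specifically to cover the short case $n=5$, where cutting the loop into two smaller loops by a type (2) shortcut is not enough.
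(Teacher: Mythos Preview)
Your overall architecture---tighten cyclically, maintain the invariant that tightness at previously-processed indices survives each new tightening, and note that the wrap-around step is handled by the symmetric half of the core claim---matches the paper's structure exactly. You are also right that the only substantive content is the ``core claim'': replacing $C_k$ by $C_k'=\partial(C_{k-1},C_{k+1})$ does not disturb the already-established equality $C_{k-1}=\partial(C_{k-2},C_k)$, absent a shortcut.

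However, you have not proved the core claim; you have only named it and described a plan (``I aim to show\ldots''; ``The main obstacle is executing this subsurface analysis cleanly''). Your proposed mechanism---decompose $S$ along $C_{k-1}$ and compare how $C_{k-2}\cup C_k$ versus $C_{k-2}\cup C_k'$ fill each complementary piece $Y$---is essentially correct but roundabout, and you have not shown how the three shortcut types actually arise from a discrepancy. The paper organizes this step more cleanly via the \emph{filled subsurface} $F(A,B)$ (the closure of a regular neighbourhood of $A\cup B$ together with complementary discs and once-punctured discs), so that tightness at $k-1$ is literally the statement $C_{k-1}=\partial F(C_{k-2},C_k)$. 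The argument then runs: $C_k'$ misses $C_{k-1}=\partial F(C_{k-2},C_k)$ and (absent a type-(1) shortcut) cuts every component of $C_{k-2}$, hence $C_k'\subset F(C_{k-2},C_k)$ and so $F(C_{k-2},C_k')\subset F(C_{k-2},C_k)$. If this inclusion were strict, an essential curve $\gamma$ in the gap would miss $C_{k-2}$ and $C_k'$ but cut $C_k$; one then checks that $\gamma$ either misses a component of $C_{k+1}$ (giving a type-(3) shortcut with the triple $C_{k-2},C_k',C_{k+1}$) or is forced into $F(C_{k-1},C_{k+1})$ and hence misses $C_k$ after all, a contradiction.

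Two smaller remarks. First, your pre-tightening shortcut check (``whether any component of $C_j'$ witnesses one of the three shortcut types'') is too narrow for type (3): the curve $\gamma$ that appears in the argument above is an essential curve in the gap between two filled subsurfaces, not a component of any $C_j'$. The check should simply be whether the updated loop admits \emph{any} shortcut in the sense of the definition. Second, you are right to flag the wrap-around explicitly; the paper leaves this implicit, but the argument is symmetric in the two neighbours of the tightened index, so the same filled-subsurface computation handles it.
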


\begin{proof}

We may assume that $c$ has no shortcuts throughout.



The proof requires a discussion of the \textit{subsurfaces filled by $C_{i-1}$ and $C_{i+1}$}, written $F(C_{i-1},C_{i+1})$, and their behaviour under the tightening procedure. This subsurface is defined to be the closure of: the union of a regular neighbourhood of $C_{i-1}\cup C_{i+1}$ union the complementary discs and once-punctured discs.

We tighten at index $i$, so we can assume $C_i=\partial (C_{i-1}\cup C_{i+1})$. Then we attempt to tighten at index $i+1$ by replacing $C_{i+1}$ with $C_{i+1}'=\partial(C_i\cup C_{i+2})$. We wish to keep being tight at $i$. We may assume that there are no shortcuts and so every component of $C_{i+1}'$ cuts every component of $C_{i-1}$. Therefore $C_{i+1}'$ is contained in $F(C_{i-1},C_{i+1})$ because it misses the boundary $C_i$. Hence $F(C_{i-1},C_{i+1}')\subset F(C_{i-1},C_{i+1})$. Now if this is a strict inclusion then there exists $\gamma$ that misses $C_{i-1}$ and $C_{i+1}'$ but is contained in $F(C_{i-1},C_{i+1})$. Therefore $\gamma$ cuts $C_{i+1}$ because $C_{i-1}$ and $C_{i+1}$ fill $F(C_{i-1},C_{i+1})$. If $\gamma$ misses a component of $C_{i+2}$ then there is a shortcut of the third type, namely via $C_{i-1}$, $C_{i+1}'$, and $C_{i+2}$, and we are done. So instead $\gamma$ cuts every component of $C_{i+2}$. Then $\gamma$ must be contained in $F(C_i,C_{i+2})$ because it misses the boundary $C_{i+1}'$. But then $\gamma$ also misses $C_{i+1}$, a contradiction. Therefore if we assume there are no shortcuts throughout then after $n$ tightenings we obtain a tight loop of length $n$.\end{proof}

We now obtain

\begin{lem} \label{lem:wiggle} Given any loop $c=(\gamma_i)_i$ of length $n\geq 5$, after a homotopy of $c$ past at most $2n$ triangles, we either find a shortcut for $c$ or it is a subpath of a tight loop of length $n$.
\end{lem}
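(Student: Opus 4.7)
The plan is to reduce Lemma~\ref{lem:wiggle} to Lemma~\ref{lemtighten} by lifting a loop of curves to a loop of curve systems and then tracking how the tightening procedure homotopes curves across triangles. First I would set $C_i \coloneq \{\gamma_i\}$ to view $c$ as a loop of curve systems of length $n$. If any of the three types of shortcut in the sense of Definition (for loops of curve systems) is present, the corresponding components are already components of the singletons $C_i$, so we have a shortcut for $c$ itself and we are done with zero triangles of homotopy. Otherwise we may assume the hypotheses needed to start tightening: consecutive $C_i, C_{i+1}$ miss and every component of $C_j$ cuts every component of $C_{j+2}$.

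Next I would apply the tightening procedure of Lemma~\ref{lemtighten}, performing at most $n$ tightenings. At each tightening of the current loop of curve systems at some index $j$, the old $C_j$ is replaced by $C_j' = \partial(C_{j-1}, C_{j+1})$. By Remark~\ref{remtighten}(5), for any chosen component $\gamma_j'$ of $C_j'$, the curve $\gamma_j'$ misses the three curves $\gamma_{j-1}, \gamma_j, \gamma_{j+1}$, so we obtain two triangles in $\mathcal{C}(S)^{(2)}$ spanned by $(\gamma_{j-1}, \gamma_j, \gamma_j')$ and $(\gamma_{j+1}, \gamma_j, \gamma_j')$. Pushing the current loop of curves across these two triangles replaces $\gamma_j$ by $\gamma_j'$, a component of the new $C_j$. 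Thus after each tightening, the tracked loop of single curves remains a subpath of the current loop of curve systems, at the cost of exactly two triangles per tightening.

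Finally, I would invoke the conclusion of Lemma~\ref{lemtighten}. Either during the at most $n$ tightenings a shortcut appears for the current loop of curve systems, in which case that shortcut restricts to a shortcut (of the same type) for the currently tracked loop of curves, and we have homotoped past at most $2n$ triangles; or the procedure terminates in a tight loop of curve systems of length $n$, and by construction the tracked loop of curves is a subpath of this tight loop. In either case we have used at most $2n$ triangles, as required. The only mildly subtle point is checking that each of the three flavours of shortcut for curve systems descends correctly to a shortcut for the loop of single curves we are tracking, but this is immediate from the definition since the tracked curves are components of the corresponding systems.
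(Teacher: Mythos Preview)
Your approach is essentially the same as the paper's, and the bookkeeping (two triangles per tightening, at most $n$ tightenings) is correct. However, there is a gap in the shortcut case. You claim that a shortcut for the loop of curve systems ``restricts to a shortcut (of the same type) for the currently tracked loop of curves'', justifying this by saying the tracked curves are components of the corresponding systems. But the definition of a shortcut for curve systems only asserts the existence of \emph{some} component $\gamma_j$ of $C_j$ (and similarly at the other indices) satisfying the relevant distance condition; there is no reason this component must coincide with the component you have already committed to tracking. After several tightenings the systems $C_j$ may have many components, and the shortcut may well involve one that is not your chosen $\gamma_j'$. So the final sentence of your proposal, which dismisses this as ``immediate from the definition'', is exactly where the argument breaks.

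The paper avoids this by reversing the order of operations: it first runs Lemma~\ref{lemtighten} purely at the level of curve systems, and only \emph{afterwards} selects a component of each curve system that appeared during the process. In the no-shortcut case the choices are arbitrary; in the shortcut case one uses precisely ``those components of those curve systems that provide the shortcut''. Either way each tightening still costs at most two triangles (by Remark~\ref{remtighten}(5), which is indifferent to which component is chosen), so the $2n$ bound is preserved. Your argument is easily repaired by adopting this retroactive choice of components rather than committing to them step by step.
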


\begin{proof} We follow Lemma~\ref{lemtighten} starting with $c$. This lemma states that after at most $n$ tightenings of the loop we either find a shortcut or the loop becomes a tight loop of length $n$. Suppose first that no shortcuts arise. Then for each curve system throughout the procedure, pick an arbitrary component. Then each loop of curve systems corresponds to a loop of curves, and whenever we tighten, then our loop of curves either stays the same or is homotoped past two triangles. Hence after homotoping $c$ past at most $2n$ triangles we obtain the required subpath.

On the other hand if there is a shortcut during the tightening process, then we use those components of those curve systems that provide the shortcut. This then provides a homotopy of $c$ past at most $2n$ triangles ending up with a shortcut for the loop of curves. \end{proof}

\subsection{Only finitely many kinds of short, tight loop}  \label{secfinite} In this section we adapt the argument in \cite[Theorem~4.7]{WebbCombinatorics} to show

\begin{thm} \label{thm:finitetight} For every surface $S$ with $\xi(S)\geq 2$ and $n\geq 5$ there are only finitely many $\mathrm{Mod}(S)$-orbits of tight loop of length $n$.
\end{thm}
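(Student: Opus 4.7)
The plan is to adapt the argument of \cite[Theorem~4.7]{WebbCombinatorics}, where the analogous finiteness statement is proved for tight paths of bounded length. Since every $C_j$ in a tight loop is a multicurve with at most $\xi(S)$ components, there are only finitely many $\mathrm{Mod}(S)$-orbits of candidates for the basepoint $C_0$, so I would first fix $C_0$ and reduce the problem to counting $\mathrm{Stab}(C_0)$-orbits of extensions $(C_1,\ldots,C_{n-1})$ completing the tight loop.

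The heart of the argument is to show by induction on $j$ that the initial segment $(C_1,\ldots,C_j)$ of a tight loop lies in only finitely many $\mathrm{Stab}(C_0)$-orbits. At the inductive step, tightness $C_j=\partial(C_{j-1},C_{j+1})$ forces $C_{j+1}$ to lie in the filled subsurface $F(C_{j-1},C_{j+1})$ with prescribed boundary $C_j$ and to fill it together with $C_{j-1}$. The no-shortcut condition of type (1)---every component of $C_{j+1}$ cuts every component of $C_{j-1}$---together with tightness constrains the topological type of the filled pair, and a standard finiteness statement (pairs of filling multicurves on a fixed compact surface with bounded complexity form finitely many orbits of its mapping class group) yields finitely many choices up to the residual $\mathrm{Stab}$-action on that subsurface. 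The no-shortcut conditions of types (2) and (3) handle the small-length cases $n=5,6$ where (1) alone is insufficient; the closing-up condition $C_{n-1}\sim C_0$ and tightness at indices $n-1$ and $0$ pin down the last of the freedom.

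The main obstacle will be the bookkeeping of residual Dehn-twist ambiguities. Purely local tightness admits Dehn-twist deformations: twisting $C_{j+1}$ along $C_{j-1}$ preserves both the relation $C_j=\partial(C_{j-1},C_{j+1})$ and disjointness from $C_j$. These twists must be absorbed step-by-step by the stabilisers of previously chosen vertices, and it is the combined effect of tightness, the no-shortcut conditions, and the closing-up constraint that makes the count finite. Making this accounting rigorous and uniform in $n$ is the technical core, as in the tight-path case of \cite[Theorem~4.7]{WebbCombinatorics}; the adaptation here requires only modest modifications to accommodate the three no-shortcut types of Definition~\ref{deftightloop} in place of the fixed-endpoints condition used for tight paths.
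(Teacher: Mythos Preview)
Your proposal has a genuine gap at the inductive step. The ``standard finiteness statement'' you invoke---that pairs of filling multicurves on a fixed compact surface form finitely many $\mathrm{Mod}$-orbits---is false: geometric intersection number is a $\mathrm{Mod}$-invariant and is unbounded over filling pairs. Concretely, already at the first step of your induction there are infinitely many $\mathrm{Stab}(C_0,C_1)$-orbits of $C_2$ satisfying $C_1=\partial(C_0,C_2)$, distinguished by $i(C_0,C_2)$. You acknowledge the Dehn-twist ambiguity but do not explain how the closing-up constraint retroactively kills these infinitely many choices; the phrase ``the combined effect \ldots\ makes the count finite'' is the entire content of the theorem, not a step one can defer.

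The paper's proof proceeds quite differently and does not attempt a forward induction. The key move is to first obtain a \emph{uniform bound on $i(C_{i-1},C_{i+1})$} (and then on $i(C_{i-1},C_{i+2})$) by exploiting the full loop structure from the outset: one projects everything to arc systems on $S-C_0$ via $\kappa_{C_0}$, uses that $\kappa_{C_0}(C_{-3})$ \emph{fills} $S-C_0$ (this is where the no-shortcut condition of type~(2) enters, forcing $d(C_0,C_{-3})\geq 3$), and then propagates finiteness \emph{around the loop} $j=-3,-4,\ldots,2$ to bound $\kappa_{C_0}(C_2)$. Since $\kappa_{C_0}(C_{\pm 2})$ determines $C_{\pm 1}$ by tightness, the intersection bound follows. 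Only after these global intersection bounds does the paper reduce to the tight-path finiteness of \cite[Theorem~4.7]{WebbCombinatorics}, by fixing the pair $(C_0,C_3)$ (now finitely many orbits) and regarding the two arcs of the loop as tight filling multipaths between them. The case $n=5$ is handled separately using $\kappa_{C_i}(C_{i-2}\cup C_{i+2})$. In short: the closing-up is used at the very beginning to bound intersection numbers, not at the end to clean up residual ambiguity.
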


\begin{proof} Let $\kappa_{C_i}(C)$ be the arc system (not counting parallel copies) of $S-C_i$ that is determined by the curve system $C$, defined when each component of $C$ cuts $C_i$. The key observation that makes this proof work is, due to being tight at $i$ (and having no shortcuts), that the arc system $\kappa_{C_{i-1}}(C_{i+1})$ determines $C_i$ \cite[Lemma~4.4]{WebbCombinatorics}.

We first claim that the geometric intersection number $i(C_{i-1},C_{i+1})$ is bounded above in terms of $S$ and $n$. So suppose that $n\geq 6$. Then $C_{-3}$ and $C_0$ fill $S$. Up to the mapping class group, there are only finitely many possibilities for $\kappa_{C_0}(C_{-3})$. Because this fills, we have only finitely many possibilities for $\kappa_{C_0}(C_{-2})$. By considering $\kappa_{C_0}(C_j)$ for $j=-3,-4,\ldots, 2$ (modulo $n$) we also have only finitely many possibilities for $\kappa_{C_0}(C_2)$. But these determine $C_{-1}$ and $C_1$, and so our first claim follows.

Now suppose that $n=5$. Then $C_{i-2}\cup C_{i+2}$ is a curve system and $\kappa_{C_i}(C_{i-2}\cup C_{i+2})$ fills $S-C_i$, because there is no shortcut. Then after a mapping class we can assume that the pair $\kappa_{C_i}(C_{i-2})$ and $\kappa_{C_i}(C_{i+2})$ is one of finitely many possibilities. But this then determines the pair $C_{i-1}$ and $C_{i+1}$, so we are done as before. In fact, this settles the theorem in the case $n=5$ by bounding the geometric intersection number of the collection of curve systems. So we assume $n\geq 6$ for the remainder of the proof.

Our second claim is that $i(C_{i-1},C_{i+2})$ is bounded above in terms of $S$ and $n$. Start off with $\kappa_{C_i}(C_{i-3})$, which intersects $C_{i-1}$ a uniformly bounded number of times by the first claim above. Then because $\kappa_{C_i}(C_{i-3})$ fills $S-C_i$, we can use an inductive argument with $j=i-3,i-4,\ldots, i+2$ (modulo $n$), to see that $i(\kappa_{C_i}(C_{i+2}),C_{i-1})$ is bounded above in terms of $S$ and $n$. By our first claim $C_{i+2}$ determines only a uniformly bounded number of parallel copies of $\kappa_{C_i}(C_{i+2})$, and so our second claim follows.

Finally, there are only finitely many $\mathrm{Mod}(S)$-orbits of the pair $C_0$ and $C_3$ by our second claim above. So fix $C_0$ and $C_3$ to be one such pair. But then we obtain two different paths of curve systems between $C_0$ to $C_3$. These are both so-called \textit{tight filling multipaths} of length at most $n$, see \cite[Section~3]{WebbCombinatorics}. But there are only finitely many such paths connecting $C_0$ to $C_3$ in terms of $S$ and $n$, see \cite[Theorem~4.7]{WebbCombinatorics} (or \cite[Appendix~A]{BirmanMM} for an exposition). Briefly speaking the idea of that proof is to consider $\kappa_{C_0}(C_i)$. Whenever this fills then there are only finitely many possibilities for $\kappa_{C_0}(C_{i+1})$ (and $\kappa_{C_0}(C_{i-1})$). However $\kappa_{C_0}(C_{2})$ (and $\kappa_{C_0}(C_{-2})$) do not fill, but in this case $C_1$ (and $C_{-1}$) are determined by those arc systems. We know precisely what $C_3$ is, so we can also deduce the finitely many possibilities for $C_2$ and $C_4$, and so on. \end{proof}

\subsection{The arc-and-curve complexes} \label{AC}

Now we show that Theorem~\ref{linearcombi}(2) follows from Theorem~\ref{linearcombi}(1).

\begin{proof}[Proof of Theorem~\ref{linearcombi}(2)]

The arc-and-curve complex is quasi-isometric to $\mathcal{C}(S)$, and so it is also hyperbolic, therefore once again it suffices to show that for any loop of vertices $c=(v_i)_i$ of bounded length, there is a bounded number of triangles required for a disc to cap off $c$ to deduce the theorem.

Let $c$ have length $n$. Our first step is to homotope $c$ into $\mathcal{C}(S)$, but across at most $3n$ triangles, and to a loop of length at most $2n$. This argument is well known. To see this define $c_i\in\mathcal{C}(S)$ for each vertex $v_i$ of $c$. If $v_i$ is a curve we set $c_i=v_i$. If $v_i$ is an arc then we set $c_i$ to be one of the peripheral curves of $S-v_i$ which is essential in $S$. Thus $v_i$ and $c_i$ are either equal or adjacent. Moreover for each $i$ we can pick a curve $c_i'$ which is peripheral in $S-v_i-v_{i+1}$ but which is essential in $S$. Such a curve $c_i'$ is adjacent (or equal) to $v_i$, $c_i$, $v_{i+1}$, and $c_{i+1}$. Therefore we can push $c$ past at most $3n$ triangles into $\mathcal{C}(S)$ to a loop of length at most $2n$.

We can then use Theorem~\ref{linearcombi}(1), unless $S$ is $S_{0,5}$ or $S_{1,2}$, in which case our loop of curves is capped off by boundedly many pentagons. But isometrically embedded pentagons are unique in $\mathcal{C}(S)$, so because $\mathcal{AC}(S)$ is contractible and therefore simply connected, there is a uniform upper bound on the triangles required. The theorem then follows.\end{proof}

\bibliography{arccatreferences}
\bibliographystyle{alpha}

\end{document}